\newtheorem{theorem}{Theorem}
\newcommand{\tabincell}[2]{\renewcommand\arraystretch{0.9}\begin{tabular}{@{}#1@{}}#2\end{tabular}}
\begin{document}

\title{Planning of Off-Grid Renewable Power to Ammonia Systems with Heterogeneous Flexibility: A Multistakeholder Equilibrium Perspective }

\author{
Yangjun~Zeng,~\IEEEmembership{Student Member,~IEEE},
Yiwei~Qiu,~\IEEEmembership{Member,~IEEE},
Jie~Zhu,~\IEEEmembership{Student Member,~IEEE},
Shi~Chen,~\IEEEmembership{Member,~IEEE},
Tianlei~Zang,~\IEEEmembership{Member,~IEEE},
Buxiang~Zhou,~\IEEEmembership{Member,~IEEE},
Ge~He, and
Xu~Ji

\thanks{Financial support was obtained from the National Key R\&D Program of China (2021YFB4000503) and the National Natural Science Foundation of China (52377116, 52377115). \textit{(Corresponding author: Yiwei Qiu)}}
\thanks{Y. Zeng, Y. Qiu, J. Zhu, S. Chen, T. Zang, and B. Zhou are with the College of Electrical Engineering, Sichuan University, Chengdu 610065, China.
(e-mail: ywqiu@scu.edu.cn)
}%
\thanks{G. He and X. Ji are with the School of Chemical Engineering, Sichuan University, Chengdu 610065, China.}
}
\maketitle
\begin{abstract}
  Off-grid renewable power to ammonia (ReP2A) systems present a promising pathway toward carbon neutrality in both the energy and chemical industries. However, due to chemical safety requirements, the limited flexibility of ammonia synthesis poses a challenge when attempting to align with the variable hydrogen flow produced from renewable power. This necessitates the optimal sizing of equipment capacity for effective and coordinated production across the system. Additionally, an ReP2A system may involve multiple stakeholders with varying degrees of operational flexibility, complicating the planning problem. This paper first examines the multistakeholder sizing equilibrium (MSSE) of the ReP2A system. First, we propose an MSSE model that accounts for individual planning decisions and the competing economic interests of the stakeholders of power generation, hydrogen production, and ammonia synthesis. We then construct an equivalent optimization problem based on Karush--Kuhn--Tucker (KKT) conditions to determine the equilibrium.
  Following this, we decompose the problem in the temporal dimension and solve it via multicut generalized Benders decomposition (GBD) to address long-term balancing issues.
  Case studies based on a realistic project reveal that the equilibrium does not naturally balance the interests of all stakeholders due to their heterogeneous characteristics. Our findings suggest that benefit transfer or re-arrangement ensure mutual benefits and the successful implementation of ReP2A projects.
\end{abstract}

\begin{IEEEkeywords}
  renewable power to ammonia, sizing, investment equilibrium, multistakeholder interests, green hydrogen 
\end{IEEEkeywords}

\section*{Nomenclature}
{\subsection{Abbreviations}
\begin{IEEEdescription}[\IEEEusemathlabelsep\IEEEsetlabelwidth{superscript}]
\addcontentsline{toc}{section}{Nomenclature}
\item[ReP2A] Renewable power to ammonia
\item[RG, rg] Renewable power generation stakeholder
\item[HP, hp] Hydrogen production stakeholder
\item[AS, as] Ammonia synthesis stakeholder
\item[WT, wt] Wind turbine
\item[PV, pv] Photovoltaic
\item[BES, bes] Battery energy storage
\item[VC, vc] Var compensation
\item[AE, ae] Alkaline electrolyzer
\item[HST, hst] Hydrogen storage
\item[ASY, asy] Ammonia synthesis
\item[AST, ast] Ammonia storage
\end{IEEEdescription}}
\vspace{-5pt}
\subsection{Indices}
\begin{IEEEdescription}[\IEEEusemathlabelsep\IEEEsetlabelwidth{superscript}]
\addcontentsline{toc}{section}{Nomenclature}
\item[$t$] Index for time intervals
\item[$i, j, j'$] Index for buses
\item[$ij$]   Index for the branch between bus $i$ and $j$
\item[$m,n$]   Index for hydrogen nodes
\item[$mn$]   Index for the pipeline between nodes $m$ and $n$
\end{IEEEdescription}
\vspace{-5pt}
\subsection{Variables}
\begin{IEEEdescription}[\IEEEusemathlabelsep\IEEEsetlabelwidth{superscript}]
\addcontentsline{toc}{section}{Nomenclature}
\item[$W^{\text{rg,wt/pv/bes/vc}}$] WT/PV/BES/VC installed capacities in the RG
\item[$W^{\text{hp,bes/ae/hst}}$]
BES/AE/HST installation capacities in the HP
\item[$W^{\text{as,hst/asy/ast}}$] HST/ASY/AST installation capacities in the AS
\item[$\rho_{t}^{\text{rg-hp/as,e}}$] Electricity prices between RGs and the HP/ASs\vspace{1pt}
\item[$\rho_{t}^{\text{hp-as,h}}$] Hydrogen price between the HP and the AS
\item[$P_{t}^{\text{rg,wt}}, Q_t^{\text{rg,wt}}$]  Active and reactive power of WT \vspace{1pt}
\item[$P_{t}^{\text{rg,pv}}, Q_t^{\text{rg,pv}}$]  Active and reactive power of PV
\item[$P_{t}^{\text{rg,wt/pv,max}}$] Maximum power of WT/PV
\item[$P_{t}^{\text{rg,wt/pv,curt}}$] Power curtailment of WT/PV
\item[$P_{t}^{\text{rg/hp,bes,c/d}}$] BES charging/discharging power in the RG/HP
\item[$P_{t}^{\text{rg,sell,hp/as}}$] Power that the RG sell to the HP/AS \vspace{1pt}
\item[$P_{t}^{\text{hp/as,buy,rg}}$] Power bought by the HP/AS from the RG
\item[$P_{t}^{\text{hp,ae/comp}}$] Power of AE and hydrogen compressor \vspace{1pt}
\item[$P_{t}^{\text{as,asy}}$] Power consumption of ASY
\item[$P_{t}^{\text{as,back}}$] Backup power for continuous operation of ASY
\item[$P_{ij,t}, Q_{ij,t}$] Active/reactive power flows on branch $ij$
\item[$\ell_{ij,t}$]  Square of current on branch $ij$
\item[$\upsilon_{i,t}$]  Square of the voltage amplitude at \textcolor{black}{bus $i$}
\item[$p_{i,t}, q_{i,t}$] Active and reactive power injections at \textcolor{black}{bus $i$} \vspace{1pt}
\item[$Q_{t}^{\text{rg,bes/vc}}$] Reactive power of BES and VC in the RG \vspace{1pt}
\item[$Q_{t}^{\text{hp,bes}}$] Reactive power of BES in the HP
\item[$S^{\text{rg/hp,bes}}_t$]  State of BES in the RG/HP\vspace{1pt}
\item[$f^{\text{hp,pro}}_t$]  Hydrogen production rate\vspace{1pt}
\item[$f^{\text{hp,sell,as}}_t$]  Hydrogen sold from the HP to the AS
\item[$f^{\text{as,buy,hp}}_t$]  Hydrogen bought by the AS from the HP \vspace{1pt}
\item[$f^{\text{hp/as,hst,in/out}}_{t}$]  Hydrogen inflow/outflow of 
HST in the HP/AS\vspace{1pt}
\item[$S^{\text{hp/as,hst}}_t$]  State of the HST in the HP/AS
\item[$F_{mn,t}$]  Average hydrogen flow of pipeline $mn$
\item[$F_{mn,t}^{\text{in/out}}$]  Hydrogen inflow/outflow of pipeline $mn$
\item[${\pi}_{m,t}$]  Pressure at hydrogen node $m$
\item[$LP_{mn,t}$]  Linepack storage of pipeline $mn$
\item[$f^{\text{as,cons}}_t$]  Hydrogen consumption for ASY
\item[$M^{\text{as,pro}}_t$]  Flow rate of ammonia production
\item[$M^{\text{as,sell}}_t$]  Ammonia sold to the external market by the AS
\item[$S^{\text{as,ast}}_t$]  State of the AST
\end{IEEEdescription}
\vspace{-5pt}
\subsection{Parameters}
\begin{IEEEdescription}[\IEEEusemathlabelsep\IEEEsetlabelwidth{superscript}]
\addcontentsline{toc}{section}{Nomenclature}
\item[$T,\tau , \Delta t$] Planning/operational horizon and step length
\item[$ {\overline W}^{\text{rg,wt/pv/bes/vc}}$]  Upper capacity limit of WT/PV/BES/VC in the RG
\item[$ {\overline W}^{\text{hp,bes/ae/hst}}$]  Upper capacity limit of BES/AE/HST  in the HP
\item[$ {\overline W}^{\text{as,hst/asy/ast}}$] Upper capacity limit of HST/ASY/AST  in the AS
\item[$\underline { W}^{\text{rg,wt/pv/bes/vc}}$]  Lower capacity limit of WT/PV/BES/VC in the RG
\item[$\underline { W}^{\text{hp,bes/ae/hst}}$]  Lower capacity limit of BES/AE/HST in the HP
\item[$\underline { W}^{\text{as,hst/asy/ast}}$] Lower capacity limit of HST/ASY/AST in the AS
\item[$c^{\text{wt/pv/bes/vc}}$] Unit investment costs of WT/PV/BES/VC
\item[$c^{\text{ae/hst/asy/ast}}$] Unit investment costs of AE/HST/ASY/AST
\item[$\eta^{\text{bes,c/d}}$] BES charging/discharging efficiencies
\item[$\overline{\eta}^{\text{bes}},\underline{\eta}^{\text{bes}}$] State limits of BES
\item[$\zeta^{\text{bes}},\sigma^{\text{deg}}$] BES self-discharge ratio and degradation cost
\item[$\overline{\upsilon}_{i},\underline{\upsilon}_{i}$] Voltage magnitude limits at bus $i$
\item[$\overline{\ell}_{ij}$] Current limit of branch $ij$
\item[$\overline{\eta}^{\text{ae}}, \underline{\eta}^{\text{ae}}$] Power limits of the hydrogen production plant
\item[$\eta^{\text{p2h}}$] Energy conversion coefficient of the AE
\item[$\eta^{\text{hp,comp}}$] Compressor power consumption coefficient
\item[$\eta^{\text{h2a}}$] ASY hydrogen consumption coefficient
\item[$\eta^{\text{p2a}}$] ASY power consumption coefficient
\item[$\overline{\eta}^{\text{hst}},\underline{\eta}^{\text{hst}}$] State limits of the HST
\item[$\underline{\pi}_{m},\overline{\pi}_{m}$] Limits of the squared hydrogen pressure at hydrogen node $m$
\item[$K_{mn}^{\text{lp}},K_{mn}^{\text{gf}}$] Weymouth constants of \textcolor{black}{pipeline} $mn$
\item[$\overline{\eta}^{\text{asy}},\underline{\eta}^{\text{asy}}$] Flow rate limits of ammonia production
\item[$\overline{r}^{\text{asy}},\underline{r}^{\text{asy}}$] Maximum ramping up and down limits of ASY
\item[$\overline{M}^{\text{as,sell}}$] Limit of ammonia sold to the external market
\item[$\rho_t^{\text{a}},\rho_t^{\text{as,back}}$] External ammonia price and backup power cost
\end{IEEEdescription}

\section{Introduction}
\label{sec:intro}

\subsection{Background and Motivation}
\label{sec:background}
\IEEEPARstart{I}{n} \textcolor{black}{recent years, wind and solar power installations have rapidly increased, accompanied by significant renewable energy sources (RESs) curtailment. Power to hydrogen (P2H) offers flexible adjustment capabilities, enabling extensive integration of RESs \cite{wang2024water}. According to the International Energy Agency (IEA), global installed electrolyzer capacity could reach 5 GW by 2024 and is expected to reach 230 GW by 2030 \cite{IEA2024}. Meanwhile, ammonia, an important global chemical material, is produced in hundreds of millions of tons annually \cite{macfarlane2020roadmap}. However, traditional fossil fuel-based ``grey ammonia" results in high carbon emissions \cite{macfarlane2020roadmap}.} Renewable power to ammonia (ReP2A) is a promising pathway for the large-scale utilization of RESs and \textcolor{black}{decarbonization} in the power and chemical industries \cite{yang2022breaking, macfarlane2020roadmap,guo2023deploying}. 
ReP2A projects have been implemented in many countries, including China \cite{neimenggu2022}, Denmark, and Australia \cite{campion2023techno}. \textcolor{black}{To the best of our knowledge, nearly 25 ReP2A projects were initiated in China in 2023 \cite{hydrogen2023}.}

\textcolor{black}{ReP2A systems can be classified into grid-connected and off-grid types based on their operational modes \cite{zeng2024scheduling,neimenggu2022}. Grid-connected systems are more stable but may face challenges such as failing to meet green certification \cite{giovanniello2024influence,yu2024optimal} due to the use of fossil fuel-based hydrogen, as well as high costs of connecting to the grid and operational limitations \cite{neimenggu2023}.}
Off-grid systems, in particular, have shown greater potential due to policy support \cite{2024} and flexibility in planning without grid integration constraints \cite{yu2024optimal}.

A key challenge in off-grid ReP2A systems is aligning ammonia synthesis (ASY), which has limited flexibility due to chemical safety requirements \cite{yu2023optimal}, with the variable hydrogen flow produced from renewable power. To address this, multistage buffer systems (BSs), including battery energy storage (BES), hydrogen storage tanks (HSTs), and ammonia storage tanks (ASTs), must be configured to ensure both economical and safe operations. The regulation durations of BES and HSTs typically range from hours to several days, whereas ASTs can span seasons. Investment in these BSs significantly impacts the operational performance of the ReP2A system. Proper planning of renewable power capacities, electrolyzers, and ASY is also crucial, as matching equipment capacities can increase their full load hours (FLHs) \cite{nayak2020techno}, thereby improving investment efficiency. In summary, coordinated sizing of the off-grid ReP2A system is essential for its technoeconomic performance and supports the development of the green hydrogen and ammonia industries \cite{nayak2020techno}.

However, involving multiple investors \cite{yu2023optimal} presents additional challenges in some projects.
For example, a renewable power-to-hydrogen (P2H) plant might be invested in by two stakeholders (renewable promoters and fertilizer producers) \cite{fernandez2022multilevel}.
In some cases, investments in ammonia plants do not include facilities for hydrogen production and renewable power generation \cite{coalchem2021}, leading to the ASY investor being a separate stakeholder. \textcolor{black}{Moreover, many emerging ReP2A projects have been jointly funded by multiple entities. In the project in Narisong, Inner Mongolia, China
\cite{sanxia2022}, the hydrogen plant is funded by China Three Gorges Corporation, while the ammonia plant is invested in by Inner Mongolia Yidong Group Jiuding Chemical Co., Ltd. The project in Ruijin, Jiangxi, China \cite{datang2024} is jointly invested by Datang Corporation Ltd., the local government, and HFG Hydrogen Energy Technology Co., Ltd. Other multistakeholder investment projects also include those in Paradip, India \cite{acwa_ammonia}, and Dubai, UAE \cite{jera_information}, which are not elaborated here.}
Consequently, the ReP2A system may involve three stakeholders: renewable power generation, hydrogen production, and ammonia synthesis (denoted RG, HP, and AS, respectively). These stakeholders often have conflicting interests, and their planning and operations influence each other.

\textcolor{black}{To guide the development of multistakeholder ReP2A systems, some regions in China have implemented policies \cite{neimenggu2022} that suggest an entity to invest in renewable generation and hydrogen production, while the ammonia plant can be invested in by another entity. However, such projects face challenges like the simultaneous initiation and commission of renewable generation and hydrogen production projects \cite{neimenggu2022}, and the need to determine downstream hydrogen application \cite{neimenggu2023}.
In regions without policy requirements, renewable generation and hydrogen production can be commissioned separately. Furthermore, recent policies \cite{neimenggu2024} in Inner Mongolia, China, allow the construction of multistakeholder ReP2A projects, and encourage stakeholders to sign long-term contracts. While the authority is guiding the development of ReP2A, there is still room for improvement of policies and measures.}

Existing works on ReP2A planning \cite{yu2023optimal,yu2024optimal} and operation \cite{wu2023multi} 
generally treat the system as a single entity \cite{yu2024optimal} or a collaborative model \cite{yu2023optimal}, overlooking the competing and conflicting interests among multiple stakeholders. To fill this gap, this paper focuses on the multistakeholder sizing problem 
in balancing the investments and profits of RGs, HPs, and ASs in ReP2A projects, reflecting real competition and dilemmas in engineering.

\subsection{Literature Review}
\label{sec:review}

In recent years, researchers have explored the planning and operation of ReP2A systems. For example, Wu et al. \cite{wu2023multi} proposed a method for grid-connected ReP2A systems to participate in electricity, hydrogen, and ammonia futures and spot markets as a virtual power plant (VPP). Yu et al. \cite{yu2023optimal} designed a two-stage optimal sizing and pricing method for grid-connected ReP2A systems, achieving globally optimal benefits while balancing the interests of multiple investors. Drawing on \cite{yu2023optimal}, Yu et al. \cite{yu2024optimal} proposed a mixed-integer linear fractional programming (MILFP) model to achieve a capacity configuration that minimizes the levelized cost of ammonia (LCOA) in off-grid ReP2A systems.

To address the spatial mismatch between renewable resources and ammonia demand, Li et al. \cite{li2022coplanning} proposed a collaborative planning model for ReP2A and the power grid, alleviating the burden of grid expansion through the hydrogen supply chain.
Zhao et al. \cite{zhao2021exploring} explored supply chain design and expansion for green ammonia over the next decade. 
Zhao et al. \cite{zhao2022potential} further analyzed the benefits of green ammonia in reducing renewable power curtailment, highlighting that ReP2A investments offer economic and environmental advantages over cross-regional power transmission.
Dinh et al. \cite{dinh2024levelised} conducted a technoeconomic analysis on pipelines, liquefied fuel tankers, and high-voltage direct current (HVDC) transmissions, indicating their suitability for short, long, and medium distances, respectively.

 With respect to the diversified utilization of ammonia, Zhao et al. \cite{zhao2023co} demonstrated that green ammonia generation and storage can mitigate long-term fluctuations in renewables, reducing the marginal cost of energy storage (ES) and coal-fired carbon emissions.
Ik{\"a}heimo et al. \cite{ikaheimo2018power} explored the feasibility of ReP2A for supplying fertilizer raw materials and serving as ES and transport media in future 100\% renewable energy systems.
Klyapovskiy et al. \cite{klyapovskiy2021optimal} introduced an energy management system for electricity--hydrogen--ammonia industrial clusters, enhancing operational flexibility and efficiency through the demand response of ammonia plants.
Osman et al. \cite{osman2020scaling} addressed hydrogen storage challenges 
for a 100\% renewable energy system using ammonia as a carrier.

However, the aforementioned studies consider the ReP2A system as planned and operated by a single entity. In the presence of multiple stakeholders and competition in transactions, unified planning and operation may not be feasible.
Research on multistakeholder equilibrium \cite{Ruiz2012Equilibria,naebi2020epec,Chen2021Conjectural} have addressed pricing and trading in day-ahead markets for  power systems \cite{Ruiz2012Equilibria}, microgrids \cite{naebi2020epec}, and multienergy systems \cite{Chen2021Conjectural,conejo2020operations}.
However, the ReP2A system differs significantly due to its heterogeneous flexibility among stakeholders with staged electricity--hydrogen--ammonia production processes and BSs. Additionally, existing research has focused primarily on operational aspects, with limited analysis of planning issues. Overall, there is currently insufficient research on multistakeholder sizing equilibrium (MSSE) in ReP2A systems 
leaving a gap in investment equilibrium in the green ammonia industry.

\subsection{Contributions}
\label{sec:contribution}

To fill the aforementioned gap, this paper analyzes the equilibrium for ReP2A multistakeholder sizing on the basis of noncooperative game theory. The main contributions of this study are summarized as follows:
\begin{enumerate}
	\item A
multistakeholder sizing equilibrium (MSSE)
model for ReP2A systems is proposed. This model encompasses the generation, storage, delivery, and consumption of electricity and hydrogen, with stakeholders simultaneously participating in transactions involving electricity, hydrogen, and ammonia.
	
	\item An equivalent optimization problem is constructed to solve the equilibrium. Moreover, the problem is decomposed in the temporal dimension and solved via a multicut generalized Benders decomposition (GBD) algorithm, thereby reducing the computational burden.
	
	\item Through case studies on a realistic system, we find 
	it difficult to simultaneously ensure the interests of all parties under free competition. We also find that a benefit transfer agreement may lead out of the dilemma.
\end{enumerate}

The rest of this paper is organized as follows. Section \ref{sec:structure} presents the structure of the ReP2A system. Sections \ref{sec:model} and \ref{sec:4method} elaborate on the proposed MSSE model and its solution methodology. Section \ref{sec:cases} presents the simulation results and discussion, and conclusions are drawn in Section \ref{sec:conclusions}.

\section{Structure of ReP2A Systems}
\label{sec:structure}

An off-grid ReP2A system, as presented in Fig. \ref{fig:system}, typically encompasses the generation, storage, delivery, and consumption of electricity and hydrogen alongside the production, storage, and sale of ammonia.
As discussed in Section \ref{sec:background}, these processes may belong to different stakeholders in real-life projects \cite{fernandez2022multilevel,coalchem2021}.
The stakeholders trade electricity, hydrogen, and ammonia with one another.

In the complete green ammonia production process, maintaining energy and mass balance is crucial. The RG, HP, and AS exhibit distinct flexibility due to their inherent physical characteristics.
To accommodate the heterogeneous flexibility, multi-type buffer systems (BSs) are implemented, with staged BES, HST, and AST designated for different timescales. \textcolor{black}{Additionally, the BES on the source side uses grid-forming technology \cite{zhu2024exploring} to provide frequency response, voltage support \cite{som2024an}, and enhance system strength \cite{liu2024system} ensuring the stable operation of the off-grid system.}

Without loss of generality, we make the following assumptions in analyzing the MSSE:

1) The RG, HP, and AS operate under individual rationality, aiming to minimize their own planning and operational costs.

2) Electricity and hydrogen prices are determined by the interaction between stakeholders without any additional constraints. The ammonia price follows the external market.

3) The RG and HP can both invest in their own BES. The HP and AS can both invest in HSTs.

4) The electricity transmission network \textcolor{black}{to HP and AS sides is invested in and owned by the RG, as policy recommendations \cite{neimenggu2024} suggest that renewable energy projects can be supported by accompanying transmission infrastructure, while} the hydrogen delivery network is owned by the HP. The network distances are determined by the geographical locations of the RG, HP, and AS.

5) The capacities of wind and solar power are predetermined based on local resources prior to planning.

\begin{figure}[t]
	\centering
	\includegraphics[width=3.45in]{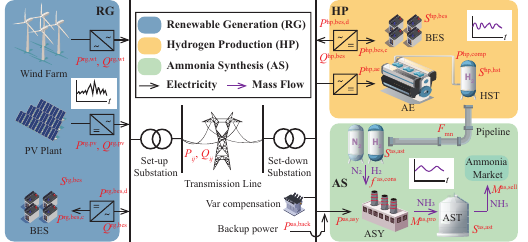}\vspace{-0pt}
	\caption{Schematic diagram of a typical off-grid ReP2A system.}
	\label{fig:system}\vspace{-0pt}
\end{figure}

\section{MSSE Model of the ReP2A System}
\label{sec:model}

This section provides a detailed modeling framework for the planning problems of the RG, HP, and AS. Furthermore, we outline the noncooperative game structure of the MSSE model for the ReP2A system.

\subsection{Modeling the Whole Process of ReP2A}
\label{sec:allprocess}

The entire ReP2A process involves the flow of energy and mass related to electricity, hydrogen, and ammonia.
\textcolor{black}{The BES is used for energy shifting, maintaining power balance, and supporting hydrogen production, while the HST ensures a stable hydrogen supply for ammonia synthesis, and the AST  smooths fluctuations in ammonia production. These systems gradually transform fluctuating renewable energy into stable ammonia production. Additionally, BES in the RG and HST in the HP are used to achieve more profits in transactions.}
The detailed physical models and planning and operational constraints of the corresponding stakeholders are as follows, where \textit{1)--3)} pertains to the RG, \textit{5)--8)} pertains to the HP, \textit{10)--12)} pertains to the AS, and \textit{4)} and \textit{9)} are the constraints coupling multiple stakeholders.

\subsubsection{RG Planning Capacities}
The equipment to be invested in by the RG includes BES and var compensation (VC) for managing active and reactive power. Their capacities are limited:
\begin{align}
	\hspace{-1pt}   \big[ \underline W^{\text{rg,bes}},\underline W^{\text{rg,vc}}  \big] \leq \big[ W^{\text{rg,bes}},W^{\text{rg,vc}}  \big] \leq \big[ \overline W^{\text{rg,bes}},\overline W^{\text{rg,vc}} \big]. \hspace{-2pt}  \label{eq:wb}
\end{align}

\subsubsection{BES Operation}
The BES provides both active and reactive power support. Its charging, discharging, and state of charge (SOC) are constrained by
\begin{align}
	& \big\Vert (P_{t}^{\text{rg,bes,c/d}} , Q_{t}^{\text{rg,bes}}) \big\Vert_2 \leq W^{\text{rg,bes}}, \label{eq:bes1}\\
	& \bm{0} \leq \big[ P_{t}^{\text{rg,bes,c}},P_{t}^{\text{rg,bes,d}} \big] \leq \big[ 0.5W^{\text{rg,bes}},0.5W^{\text{rg,bes}} \big],  \label{eq:bes2}  \\
	& S_{t}^{\text{rg,bes}}=(1-\zeta^{\text{bes}})S_{t-1}^{\text{rg,bes}} + (\eta^{\text{bes,c}}P_{t}^{\text{rg,bes,c}} - \dfrac {P_{t}^{\text{rg,bes,d}}}{\eta^{\text{bes,d}}})\Delta t, \label{eq:bes3}\\
	& S_{t=0}^{\text{rg,bes}}=S_{t=\tau}^{\text{rg,bes}}, \\
	& \underline{\eta}^{\text{bes}}W^{\text{rg,bes}} \leq S_{t}^{\text{rg,bes}}\leq\overline{\eta}^{\text{bes}}W^{\text{rg,bes}}. \label{eq:bes5}
\end{align}

 Moreover, its degradation cost is included in the objective with exact convex-relaxed constraints \cite{wang2024exact}; see Section \ref{sec:objrg}.

\subsubsection{Electrical Power Flow}
\textcolor{black}{The ReP2A network security constraints are met through power flow constraints. From the perspective of power flow direction and topology, the ReP2A system resembles a distribution network \cite{zeng2024scheduling}, as it features a radial network without a looped structure.}
The electrical network is described via \textcolor{black}{a branch flow model called DistFlow} \cite{steven2013branch}, as shown in  (\ref{eq:dist1})--(\ref{eq:dist5}). The power injection at each bus $i$ is determined by (\ref{eq:dist6})--(\ref{eq:dist9}). \textcolor{black}{We do not use the DC power flow, which is commonly applied in the transmission network, because the DistFlow model is more accurate and it has manageable computational complexity.}
\begin{align}
	&\textstyle \sum_{j':j\to j'}P_{jj',t}=p_{j,t}+\sum_{i:i\to j}P_{ij,t}-r_{ij}\ell_{ij,t}, \label{eq:dist1}\\
	&\textstyle \sum_{j':j\to j'}Q_{jj',t}=q_{j,t}+\sum_{i:i\to j}Q_{ij,t}-x_{ij}\ell_{ij,t}, \\
	&\upsilon_{j,t}=\upsilon_{i,t}-2r_{ij}P_{ij,t}-2x_{ij}Q_{ij,t}+(r_{ij}^{2}+x_{ij}^{2})\ell_{ij,t}, \\
	&\left\|\left(2P_{ij,t},2Q_{ij,t},\ell_{ij,t}-\upsilon_{i,t}\right)\right\|_{2}\leq\ell_{ij,t}+\upsilon_{i,t}, \\
	&\underline{\upsilon}_{i} \leq \upsilon_{i,t}\leq\overline{\upsilon}_{i},\ \ell_{ij,t}\leq\overline{\ell}_{ij},  \label{eq:dist5}\\
	&p_{i,t}=P_{i,t}^{\text{rg,wt}}+P_{i,t}^{\text{rg,pv}}+P_{i,t}^{\text{rg,bes,d}} \nonumber \\
	& \hspace{30pt} -P_{i,t}^{\text{rg,bes,c}}-P_{i,t}^{\text{rg,sell,hp}}-P_{i,t}^{\text{rg,sell,as}}, \label{eq:dist6}\\
	&q_{i,t}=Q_{i,t}^{\text{rg,wt}}+Q_{i,t}^{\text{rg,pv}}+Q_{i,t}^{\text{rg,vc}}+Q_{i,t}^{\text{rg,bes}}, \label{eq:dist7}\\
	& P_{t}^{\text{rg,wt/pv}}=P_{t}^{\text{rg,wt/pv,max}}-P_{t}^{\text{rg,wt/pv,curt}}, \\
	& \big\Vert (P_{t}^{\text{rg,wt/pv}} , Q_{t}^{\text{rg,wt/pv}})\big\Vert_2 \leq W^{\text{rg,wt/pv}},\\
	&|Q_{t}^{\text{rg,vc}}|\leq W^{\text{rg,vc}}.\label{eq:dist9}
\end{align}

\subsubsection{Electricity Transactions}
The RG stakeholder sells renewable electricity to the HP and AS stakeholders. When a transaction is completed, the quantity and price satisfy:
\begin{align}\hspace{-0pt}
	\begin{cases}
		P_{t}^{\text{rg,sell,hp}}-P_t^{\text{hp,buy,rg}}=0:(\rho_t^{\text{rg-hp,e,rg}},\rho_t^{\text{rg-hp,e,hp}},\rho_t^{\text{rg-hp,e,as}}),\\
		P_{t}^{\text{rg,sell,as}}-P_t^{\text{as,buy,rg}}=0:(\rho_t^{\text{rg-as,e,rg}},\rho_t^{\text{rg-as,e,hp}},\rho_t^{\text{rg-as,e,as}}),  \label{eq:market1}
	\end{cases}\hspace{-0pt}
\end{align}
where $\rho_t^{\text{rg-hp,e,rg/hp/as}}$ and $\rho_t^{\text{rg-as,e,rg/hp/as}}$ are the dual variables for the individual planning problems of 
RG/HP/AS, respectively.

\subsubsection{HP Planning Capacities}
The HP can invest in alkaline electrolyzers (AEs), HSTs, and its own BESs, as shown in Fig. \ref{fig:system}. The planning capacities of the AE and HST are limited by
\begin{align}
	& \big[ \underline W^{\text{hp,ae}},\underline W^{\text{hp,hst}} \big] \leq \big[W^{\text{hp,ae}},W^{\text{hp,hst}}\big] \leq \big[\overline W^{\text{hp,ae}},\overline W^{\text{hp,hst}} \big],  \label{eq:wel}
\end{align}
and the planning and operational constraints of the BES share the same forms of (\ref{eq:wb}) and  (\ref{eq:bes1})--(\ref{eq:bes5}), with decision variables replaced by $W^{\text{hp,bes}}$, $P_{t}^{\text{hp,bes,c/d}}$, $Q_{t}^{\text{hp,bes}}$, and $S_{t}^{\text{hp,bes}}$.

\subsubsection{P2H Operation}
The linear model (\ref{eq:p2h}) from \cite{li2022coplanning} is used to approximate the hydrogen production of the industry-scale hydrogen plant.
The load range follows (\ref{eq:p2hmaxmin}). Before storage and delivery, 
hydrogen is pressurized, and the power consumption follows (\ref{eq:hcomp}). Additionally, the HP stakeholder may also invest in BES to assist in 
maintaining power balance (\ref{eq:hbanlance}) at the load bus and managing electricity transactions with the RG.
\begin{align}
	&f_{t}^{\text{hp,pro}}=P_{t}^{\text{hp,ae}}\eta^{\text{p2h}}, \label{eq:p2h}\\
	&\underline{\eta}^{\text{ae}}W^{\text{hp,ae}}\leq P_{t}^{\text{hp,ae}}\leq\overline{\eta}^{\text{ae}}W^{\text{hp,ae}},  \label{eq:p2hmaxmin}\\
	&P_{t}^{\text{hp,comp}}=f_{t}^{\text{hp,pro}}\eta^{\text{comp}}, \label{eq:hcomp}\\
	&P_{t}^{\text{hp,buy,rg}}+P_{t}^{\text{hp,bes,d}}=P_{t}^{\text{hp,bes,c}}+P_{t}^{\text{hp,ae}}+P_{t}^{\text{hp,comp}}. \label{eq:hbanlance}
\end{align}

\subsubsection{HST Operation}
The HST invested in by the HP stakeholder is used to
accommodate the varying hydrogen flow and maximize the benefit from the hydrogen transactions with the AS stakeholder. The state of the HST satisfies (\ref{eq:hes1})--(\ref{eq:hes3}), with hydrogen charging and release \textcolor{black}{being} constrained by (\ref{eq:hes4}).
\begin{align}
	&S_{t+1}^{\text{hp,hst}} =S_{t}^{\text{hp,hst}}+(f_{t}^{\text{hp,hst,in}}-f_{t}^{\text{hp,hst,out}})\Delta t, \label{eq:hes1}\\
	&S_{t=0}^{\text{hp,hst}}=S_{t=T}^{\text{hp,hst}}, \\
	&\underline{\eta}^{\text{h}}W^{\text{hp,hst}}\leq S_{t}^{\text{hp,hst}}\leq\overline{\eta}^{\text{h}}W^{\text{hp,hst}}, \label{eq:hes3} \\
	&\bm{0}\leq [f_{t}^{\text{hp,hst,in}},f_{t}^{\text{hp,hst,out}}]\leq[0.5W^{\text{hp,hst}}, 0.5W^{\text{hp,hst}}].    \label{eq:hes4}
\end{align}

\subsubsection{Hydrogen Delivery Network}
The hydrogen flow in pipelines can be modelled by the modified Weymouth equation with second-order cone relaxation \cite{an2024decomposition}, as 
(\ref{eq:weymouth})--(\ref{eq:pressure}). Linepack  dynamics are shown in (\ref{eq:lp})--(\ref{eq:linepack}), and linepack recycling conditions are presented in (\ref{eq:lp-balance}) for each operational horizon. The balance of hydrogen flow in the network is constrained by (\ref{eq:hydrogenbalance}).
\begin{align}
	&(F_{mn,t} / K_{mn}^{\text{gf}})^{2}\leq\pi_{m,t}^{2}-\pi_{n,t}^{2}, \label{eq:weymouth}\\
	&F_{mn,t}=(F_{mn,t}^{\text{in}}+F_{mn,t}^{\text{out}})/2, ~F_{mn,t}\geq0 ,  \label{eq:hydrogenflow}\\
	&\underline{\pi}_{m}\leq\pi_{m,t}\leq\bar{\pi}_{m},  \label{eq:pressure}\\
	&LP_{mn,t}=K_{mn}^{\text{lp}}(\pi_{m,t}+\pi_{n,t})/2, \label{eq:lp}\\
	&LP_{mn,t+1}=LP_{mn,t}+F_{mn,t}^{\text{in}}-F_{mn,t}^{\text{out}}, \label{eq:linepack}\\
	&LP_{mn,t=0}=LP_{mn,t=\tau}, \label{eq:lp-balance}\\
	 &f_{t}^{\text{hp,pro}}+f_{t}^{\text{hp,hst,out}}-f_{t}^{\text{hp,hst,in}}+F_{mn,t}^{\text{out}}-F_{mn,t}^{\text{in}}=f_{t}^{\text{hp,sell,as}}.\hspace{-5pt} \label{eq:hydrogenbalance}
\end{align}

\subsubsection{Hydrogen Transaction}
The HP stakeholder sells hydrogen to the AS, and the clearing condition of the transaction is
\begin{align}
	f_t^{\text{hp,sell,as}}-f_t^{\text{as,buy,hp}}=0:(\rho_t^{\text{hp-as,h,rg}},\rho_t^{\text{hp-as,h,hp}},\rho_t^{\text{hp-as,h,as}}), \label{eq:market2}
\end{align}
where $\rho_t^{\text{hp-as,h,rg/hp/as}}$ are dual variables. 

\subsubsection{AS Planning Capacities}
The AS stakeholder may invest in ASY, AST, and its own HST to accommodate the variable hydrogen supply and help bargain with the HP stakeholder. The capacities of these facilities are limited by
\begin{align}
	&[\underline W^{\text{as,asy}},\underline W^{\text{as,ast}}]\leq [W^{\text{as,asy}},W^{\text{as,ast}}]\leq [\overline W^{\text{as,asy}},\overline W^{\text{as,ast}}]. \label{eq:Was}
\end{align}

The HST operational constraints of the AS share the same forms of (\ref{eq:wel}) and (\ref{eq:hes1})--(\ref{eq:hes4}), with the decision variables replaced by $W^{\text{as,hst}}$, $f_{t}^{\text{as,hst,in/out}}$, and $S_{t}^{\text{as,hst}}$.

\subsubsection{ASY Operation}
The ASY produces ammonia from hydrogen bought from the HP and nitrogen separated from the air via the Haber--Bosch process \cite{campion2023techno}. 
Its hydrogen and power consumption (for air separation and other auxiliary equipment) satisfy (\ref{eq:h2a}) and (\ref{eq:p2a}) \cite{li2022coplanning}.
The electrical power comes not only from the RG but also from the backup power supply, as shown in (\ref{eq:pas}), to ensure chemical process safety. The operating range and ramping limits of ASY, constrained by chemical safety considerations \cite{klyapovskiy2021optimal}, are shown in (\ref{eq:aslimit})--(\ref{eq:asramp}), and the balance of hydrogen is depicted in (\ref{eq:hydrogenbalance2}).
\begin{align}
	&M_{t}^{\text{as,pro}}=f_{t}^{\text{as,cons}}\eta^{\text{h2a}},  \label{eq:h2a}\\
	&M_{t}^{\text{as,pro}}=P_{t}^{\text{as,asy}}\eta^{\text{p2a}},  \label{eq:p2a}\\
	&P_{t}^{\text{as,back}}+P_{t}^{\text{as,buy,rg}}=P_{t}^{\text{as,asy}},  \label{eq:pas}\\
	&\underline{\eta}^{\text{asy}}W^{\text{as,asy}}\leq M_{t}^{\text{as,pro}}\leq\overline{\eta}^{\text{asy}}W^{\text{as,asy}}, \label{eq:aslimit}\\
	-&\underline{r}^{\text{asy}}W^{\text{as,asy}}\leq M_{t+1}^{\text{as,pro}}-M_{t}^{\text{as,pro}}\leq\overline{r}^{\text{asy}}W^{\text{as,asy}}, \label{eq:asramp}\\
	&f_{t}^{\text{as,buy,hp}}+f_{t}^{\text{as,hst,out}}=f_{t}^{\text{as,hst,in}}+f_{t}^{\text{as,cons}}. \label{eq:hydrogenbalance2}
\end{align}

\subsubsection{AST Operation}
The AST invested in by the AS is used
to accommodate the varying yield of green ammonia and manage the sales to the external ammonia market under fluctuating prices. The operational constraints of the AST include
\begin{align}
	&S_{t+1}^{{\text{as,ast}}}=S_{t}^{{\text{as,ast}}}+(M_{t}^{{\text{as,pro}}}-M_{t}^{{\text{as,sell}}})\Delta t,\\
	&S_{t=0}^{{\text{as,ast}}}=S_{t=T}^{{\text{as,ast}}},\\
	&0\leq S_{t}^{{\text{as,ast}}}\leq W^{{\text{as,ast}}},\\
	&0\leq M_{t}^{{\text{as,sell}}}\leq\overline{M}^{\text{as,sell}}. \label{eq:amsell}
\end{align}

\begin{table*}[tb]\scriptsize
	\renewcommand{\arraystretch}{1.6}
	\caption{Summary of the Multistakeholder Planning Problem of the ReP2A System}\vspace{-0pt}
	\label{tab:summary}
	\setlength{\tabcolsep}{3pt}
	\centering
	\begin{tabular}{ccccc}
		\hline\hline
		\vspace{-10pt}
		\\
		Stakeholder   & Objective 	&\tabincell{c}{Independent\\constraints}    & Decision variables    &\tabincell{c}{Coupling constraints}  \\
		\hline
		RG            & (\ref{eq:CR})   & (\ref{eq:wb})--(\ref{eq:dist9})       &  $\bm{x}^{\text{rg}} \triangleq \{W^{\text{rg,bes/vc}},P^{\text{rg,wt/pv}},P^{\text{rg,wt/pv,curt}},P^{\text{rg,sell,hp,as}},P^{\text{rg,bes,c/d}},S^{\text{rg,bes}},Q^{\text{rg,wt/pv/bes/vc}},P_{ij}, Q_{ij}, \ell_{ij}, \upsilon_{i},p_{i}, q_{i}$\}    & \multirow{3}{*}{(\ref{eq:market1}), (\ref{eq:market2}) }    \\
		HP            & (\ref{eq:CH})   & (\ref{eq:wel})--(\ref{eq:hydrogenbalance})      & $\bm{x}^{\text{hp}} \triangleq \{W^{\text{hp,bes/ae/hst}},P^{\text{hp,ae/comp}},P^{\text{hp,buy,rg}},P^{\text{hp,bes,c/d}},S^{\text{hp,hst/ast}},f^{\text{hp,pro}},f^{\text{hp,sell,as}},f^{\text{hp,hst,in/out}},F_{mn}^{\text{in/out}},LP_{mn},{\pi}_{m}\}$    &   \\
		AS            & (\ref{eq:CA})   &  (\ref{eq:Was})--(\ref{eq:amsell})  & $\bm{x}^{\text{as}} \triangleq \{W^{\text{as,hst/asy/ast}},P^{\text{as,asy/back}},P^{\text{as,buy,rg}},f^{\text{as,buy,hp}},f^{\text{as,hst,in/out}},M^{\text{as,pro/sell}},S^{\text{as,ast}}\}$
		&     \\
		\hline\hline
	\end{tabular}\vspace{-0pt}
\end{table*}

\subsection{Multi-Stakeholder Sizing Model}
\label{sec:obj}

In the planning and operation phases, the RG, HP, and AS stakeholders interact with one another, requiring that the competing economic benefits of each stakeholder be considered, which differs from planning the ReP2A system as a single entity as reported in the literature. To characterize these interactions, a multistakeholder sizing model is proposed. The mathematical expressions are given as follows.

\subsubsection{RG}
\label{sec:objrg}

The objective of RG planning is to minimize its total cost $C^\text{rg}$,
\begin{align}
	\min\ C^\text{rg}=C_{\text{inv}}^\text{rg}+C_{\text{ope}}^\text{rg}, \label{eq:CR}
\end{align}

\noindent
which includes the investment cost 
\begin{align}
	\nonumber C_{\text{inv}}^{\text{rg}}=&\text{CRF}(r,y)(c^\text{wt}W^\text{rg,wt}+c^\text{pv}W^\text{rg,pv}+c^\text{bes}W^\text{rg,bes}\\
	&+c^\text{vc}W^\text{rg,vc}+C^{\text{rg,line}}),
\end{align}
and the operational cost 
\begin{align}
	\nonumber C_{\text{ope}}^{\text{rg}}=&\frac{8760}T\sum\nolimits_{t=1}^T(\sigma^{\text{deg}}P_{t}^{\text{rg,bes,d}}-P_{t}^{\text{rg,sell,hp}}\rho_t^{\text{rg-hp,e,rg}})\Delta t\\
	&-P_{t}^{\text{rg,sell,as}}\rho_t^{\text{rg-as,e,rg}}\Delta t
	+{\eta}_{\text{O\&M}}C_{\text{inv}}^{\text{rg}},
\end{align}
where $\text{CRF}(r,y)={r(1+r)^y}/{(\left(1+r\right)^y-1)}$ is the capital recovery factor; $r=8\%$ and $y$ are the discount rate and lifetime; 
$C^{\text{rg,line}}$ is the transmission line cost; and ${\eta}_{\text{O\&M}}=2\%$ is the annual operation and maintenance (O\&M) cost factor. \textcolor{black}{Here, we do not plan the power lines and hydrogen pipelines; instead, their fixed costs are included in the cost function according to assumption 4).}

\subsubsection{HP}
\label{sec:objhp}

The planning goal of the HP stakeholder is to minimize its total cost
\begin{align}
	\min\ C^\text{hp}=C_{\text{inv}}^\text{hp}+C_{\text{ope}}^\text{hp}, \label{eq:CH}
\end{align}
which includes investment cost $C_{\text{inv}}^{\text{hp}}$ and operational cost $C_{\text{ope}}^{\text{hp}}$:
\begin{align}
	\nonumber C_{\text{inv}}^{\text{hp}}=&\text{CRF}(r,y) \big(c^{\text{ae}}W^{\text{hp,ae}}+c^{\text{hst}}W^{\text{hp,hst}}+c^\text{bes}W^\text{hp,bes}\\
	&+C^{\text{hp,pipe}} \big),\\
	\nonumber   C_{\text{ope}}^{\text{hp}}=& \frac{8760}{T}  \sum\nolimits_{t=1}^T \big(P_t^{\text{hp,buy,rg}}\rho_t^{\text{rg-hp,e,hp}}+\sigma^{\text{deg}}P_{t}^{\text{hp,bes,d}} \big)\Delta t \\ &-f_t^{\text{hp,sell,as}}\rho_t^{\text{hp-as,h,hp}}\Delta t+{\eta}_{\text{O\&M}}C_{\text{inv}}^{\text{hp}},
\end{align}
where $C^{\text{hp,pipe}}$ is the hydrogen pipeline cost.

\subsubsection{AS}
The planning goal of the AS stakeholder is to minimize its total cost:
\begin{align}
	\min\ C^\text{as}=C_{\text{inv}}^\text{as}+C_{\text{ope}}^\text{as}, \label{eq:CA}
\end{align}
where the investment cost $C_{\text{inv}}^{\text{as}}$ and operation cost $C_{\text{ope}}^{\text{as}}$ follow
\begin{align}
	\hspace{-3pt}
	C_{\text{inv}}^\text{as}=&\text{CRF}(r,y)(c^\text{as}W^\text{as,asy}+c^\text{ast}W^\text{as,ast}+c^{\text{hst}}W^{\text{as,hst}}),\\
	\hspace{-3pt}
	\nonumber C_{\text{ope}}^{\text{as}}=&\frac{8760}T \sum\nolimits_{t=1}^T \big[(f_t^{\text{as,buy,hp}}\rho_t^{\text{hp-as,h,as}}+P_t^{\text{as,buy,rg}}\rho_t^{\text{rg-as,e,as}})\Delta t\\
	&
	+P_t^{\text{as,back}}\rho_t^{\text{as,back}}\Delta t-M_t^{\text{as,sell}}\rho_t^{\text{a}}\Delta t \big] +{\eta}_{\text{O\&M}}C_{\text{inv}}^{\text{as}}. \label{eq:CAope}
\end{align}

Given the above, we summarize the multistakeholder planning problem in Table \ref{tab:summary}.
For simplicity, \textcolor{black}{we use $(\bm{x}^k)_{k\in\{\text{rg,hp,as}\}}$, $(\bm{h}^k)_{k\in\{\text{rg,hp,as}\}}$, and $(\bm{g}^k)_{k\in\{\text{rg,hp,as}\}}$ to denote all independent decision variables, equality constraints, and inequality constraints of the RG, HP, and AS, respectively. Following this, the strategy set for each stakeholder can be defined as
\begin{align}
X^k=\{\bm{x}^k:\bm{h}^k(\bm{x}^k)=\bm{0},\bm{g}^k(\bm{x}^k)\leq \bm{0}\},~\forall k \in \{\text{rg,hp,as}\}.
\end{align}}

Considering the individual rationality of each stakeholder and competition in the transactions, we frame the multi-stakeholder sizing problem as a non-cooperative game and propose an MSSE model for the ReP2A system, with the game structure depicted in Fig. \ref{fig:game}. \textcolor{black}{The game is defined as follows:
\begin{itemize}
	 \item Player: RG, HP, and AS, denoted by \{rg,hp,as\}.
     \item Strategy: all decision variables $\bm{x}^k$, where $\bm{x}^k \in X^k, k\in \{\text{rg,hp,as}\}$.
     \item Payoff: objective function $C^k$, $k\in \{\text{rg,hp,as}\}$.
\end{itemize}}

\textcolor{black}{The competition between the RG, HP, and AS is relatively equal, which is different from the relationship between the power aggregator and electricity users \cite{tavakkoli2021bonus}, or between hydrogen refueling stations and hydrogen fuel cell vehicles \cite{zhang2023coordinated}. In analogy, it is more similar to the relationship between power, nature-gas, \cite{Chen2021Conjectural,conejo2020operations} and heating systems \cite{chen2022strategic}. Therefore, there is no absolute leader and follower, and it is modeled as a Nash game \cite{Chen2021Conjectural,conejo2020operations,chen2022strategic} rather than a Stackelberg game \cite{tavakkoli2021bonus, zhang2023coordinated}.}

According to (\ref{eq:CR})-(\ref{eq:CAope}), there are conflicts of interest among RG, HP, and AS, particularly in terms of the revenues and costs related to electricity and hydrogen trading, which will be discussed in Section \ref{sec:planning results}. Moreover, the investment costs of each stakeholder are not balanced due to heterogeneous physical characteristics in the staged processes, and we discuss this later in Sections \ref{sec:planning} and \ref{sec:extension}.

\begin{figure}[t]
	\centering
	\includegraphics[width=3.45in]{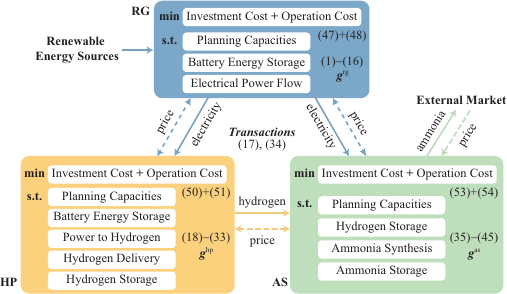}\vspace{-0pt}
	\caption{Game structure of MSSE in planning the ReP2A system.}
	\label{fig:game}\vspace{-0pt}
\end{figure}

\section{Solution Method for the MSSE Model}
\label{sec:4method}

To obtain the equilibrium of the proposed MSSE model, this section transforms the system of equations for solving equilibrium, using the direct approach, into a convex optimization problem. A temporal decomposition-based algorithm is subsequently developed to solve this problem efficiently.

\subsection{Direct Approach Based on KKT Conditions}
\label{sec:4.1kkt}

The equilibrium of noncooperative games is usually solved via fixed-point iteration \cite{li2022noncooperative} and direct approaches \cite{pan2021investment,Chen2021Conjectural}, including those based on Karush--Kuhn--Tucker (KKT) conditions \cite{pan2021investment} and strong duality \cite{Chen2021Conjectural}. \textcolor{black}{However, in the iteration method, the numerous interaction variables between stakeholders in planning problems make it difficult to ensure convergence. Additionally, the solution time for each individual stakeholder cannot be guaranteed and may even be infeasible. Thus, due to the slow convergence of fixed-point iteration, it may not be suitable for solving MSSE in this paper. Moreover,} strong duality introduces bilinear terms, making it challenging to solve. Thus, we attempt to solve the MSSE model via KKT conditions.

The RG, HP, and AS models established in Section \ref{sec:model} are all convex. The KKT conditions $\Omega^{k}$ for each stakeholder $\forall k \in \{ \text{rg,hp,as}\}$ are given as follows:\textcolor{black}{
\begin{subequations}
	\begin{align}
		\nonumber &\nabla_{\bm{x}^{k}}C^{k}+(\bm{\lambda}^{k})^{\text{T}}\nabla_{\bm{x}^{k}}\bm{g}^{k}+(\bm{\mu}^{k})^{\text{T}}\nabla_{\bm{x}^{k}}\bm{h}^{k}\\
        & +(\bm{\rho}^{\text{rg-hp,e},k})^{\text{T}}\nabla_{\bm{x}^{k}}(P^{\text{rg,sell,hp}}-P^{\text{hp,buy,rg}}) \nonumber\\
		\nonumber&+(\bm{\rho}^{\text{rg-as,e},k})^{\text{T}}\nabla_{\bm{x}^{k}}(P^{\text{rg,sell,as}}-P^{\text{as,buy,rg}})\\
		&+(\bm{\rho}^{\text{hp-as,h},k})^{\text{T}}\nabla_{\bm{x}^{k}}(f^{\text{hp,sell,as}}-f^{\text{as,buy,hp}}) = \bm{0} ,\\
		&(\bm{\lambda}^{k})^{\text{T}}\bm{g}^{k} = \bm{0}, ~\bm{\lambda}^{k} \geq \bm{0},~ \bm{g}^{k}\leq \bm{0},  \label{eq:complementary1} \\
        &\bm{h}^{k}=\bm{0},  \\
		&\text{(\ref{eq:market1})},\ \text{(\ref{eq:market2})},
  \end{align}\label{eq:equilibrium}\end{subequations}
where $\bm{\lambda}^{k}$ and $\bm{\mu}^{k}$ are the dual variables of $\bm{g}^{k}$ and $\bm{h}^{k}$. There are 9 dual variables in total, with 6 in (\ref{eq:market1}) and 3 in (\ref{eq:market2}). Among them, there are 3 for RG, 3 for HP, and 3 for AS. In summary, each KKT condition contains 3 dual variables.}

The solution of the joint KKT conditions $\{ \Omega^{\text{rg}}, \Omega^{\text{hp}}, \Omega^{\text{as}} \}$ is the equilibrium \cite{dreves2011solution}. However, dealing with the complementary constraints (\ref{eq:complementary1}) in $\Omega^{k}$ introduces many 0--1 variables when applied to complex problems, making them difficult to solve. Therefore, we construct an equivalent optimization problem \cite{egging2020solving,wang2021distribute}, formed as second-order cone programming (SOCP), to address the joint KKT conditions.

\subsection{Equivalent Problem for Solving the Equilibrium}
\label{sec:4.2proof}

The dual variables \textcolor{black}{of} (\ref{eq:market1}) and (\ref{eq:market2}) represent the equilibrium prices for different stakeholders. When transactions are reached, trading prices w.r.t. all parties $\forall k\in\{\text{rg,hp,as}\}$ are the same, as
\begin{align}
  \bm{\rho}^{\text{rg-hp,e}}=\bm{\rho}^{\text{rg-hp,e,}k},\bm{\rho}^{\text{rg-as,e}}=\bm{\rho}^{\text{rg-as,e,}k},\bm{\rho}^{\text{hp-as,h}}=\bm{\rho}^{\text{hp-as,h,}k}. \label{eq:ve}
\end{align}
The equilibrium under constraint (\ref{eq:ve}) is known as \emph{variational equilibrium (VE)} \cite{Chen2021Conjectural}.

\begin{theorem}
	\label{theorem}
	The VE of (\ref{eq:equilibrium}) is equivalent to the optimal solution of problem (\ref{eq:newproblem}), which is
\textcolor{black}{
	\begin{align}
		\nonumber \min_{\bm{x}^{k},\forall k} ~&  \sum\nolimits_{k\in \{\text{rg,hp,as}\}} C^{k}, \\
		\nonumber  \text{ s.t.}  ~~ & \bm{h}^{k} = \bm{0} {\colon\bm{\mu}^{k}},~\forall k \in \{\text{rg,hp,as}\}, \\
        \nonumber  & \bm{g}^{k}\leq \bm{0} {\colon\bm{\lambda}^{k}},~\forall k \in \{\text{rg,hp,as}\}, \\
		&\text{(\ref{eq:market1})}: \big[\bm{\rho}^{\text{rg-hp,e}},\bm{\rho}^{\text{rg-as,e}}\big], ~\text{(\ref{eq:market2})}:\bm{\rho}^{\text{hp-as,h}}. \label{eq:newproblem}
	\end{align}}\noindent
\end{theorem}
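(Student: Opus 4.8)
The plan is to show the two problems share the same KKT system, so that (by convexity) their solution sets coincide. First I would write down the KKT conditions of the aggregate problem \eqref{eq:newproblem}. Since the objective is the separable sum $\sum_k C^k$ and each block of independent constraints $\bm{g}^k\le\bm 0$ involves only $\bm{x}^k$, the stationarity condition of \eqref{eq:newproblem} decouples across $k$: differentiating the Lagrangian with respect to $\bm{x}^k$ picks up $\nabla_{\bm{x}^k}C^k$, the term $(\bm{\lambda}^k)^{\mathrm T}\nabla_{\bm{x}^k}\bm{g}^k$, and — crucially — the gradients of the coupling equalities \eqref{eq:market1}--\eqref{eq:market2} weighted by the \emph{single} multiplier vectors $\bm\rho^{\text{rg-hp,e}},\bm\rho^{\text{rg-as,e}},\bm\rho^{\text{hp-as,h}}$ attached to those shared constraints. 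Complementary slackness and dual feasibility for $\bm g^k$ give exactly \eqref{eq:complementary1}, and primal feasibility returns \eqref{eq:market1}--\eqref{eq:market2}. Hence the KKT system of \eqref{eq:newproblem} is precisely the union $\{\Omega^{\text{rg}},\Omega^{\text{hp}},\Omega^{\text{as}}\}$ \emph{together with} the identification \eqref{eq:ve} that every stakeholder sees the same trading-price dual — i.e.\ the VE conditions of \eqref{eq:equilibrium}.

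Next I would argue the equivalence in both directions. For ``$\Leftarrow$'': given a VE of \eqref{eq:equilibrium}, the common price vectors serve as the multipliers for \eqref{eq:market1}--\eqref{eq:market2} in \eqref{eq:newproblem}, and each stakeholder's $(\bm\lambda^k)$ serves as its multiplier for $\bm g^k$; stacking the per-stakeholder stationarity equations reproduces the aggregate stationarity condition because the coupling gradients are the same expressions. Since \eqref{eq:newproblem} is convex (the paper has already noted all three stakeholder models are convex, and the objective is linear in the decision variables after the prices are treated as multipliers), its KKT conditions are sufficient for global optimality, so the VE point is optimal for \eqref{eq:newproblem}. For ``$\Rightarrow$'': an optimal solution of \eqref{eq:newproblem} admits KKT multipliers (a constraint qualification such as Slater's holds for the SOCP, which I would invoke), and reading those multipliers as the stakeholders' duals $\bm\lambda^k$ and as the shared prices $\bm\rho^{\bullet}$ yields a point satisfying every $\Omega^k$ and \eqref{eq:ve}, hence a VE. Convexity of each stakeholder's own problem then guarantees these KKT points are genuine best responses, so the tuple is an equilibrium, not merely a stationary point.

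The main obstacle I expect is bookkeeping rather than depth: one must check that the gradient of each coupling equality enters the stationarity equations of exactly the two stakeholders it links, with opposite signs (e.g.\ $+\partial/\partial P^{\text{rg,sell,hp}}$ in $\Omega^{\text{rg}}$ and $-\partial/\partial P^{\text{hp,buy,rg}}$ in $\Omega^{\text{hp}}$), so that a single multiplier $\bm\rho^{\text{rg-hp,e}}$ can simultaneously play the role of $\bm\rho^{\text{rg-hp,e,rg}}$ and $\bm\rho^{\text{rg-hp,e,hp}}$ — this is exactly where the VE restriction \eqref{eq:ve} is used and where a general (non-variational) equilibrium would fail to be captured by a single optimization. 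A secondary point to state carefully is the constraint qualification needed for the $\Rightarrow$ direction; I would remark that Slater's condition holds for the second-order-cone feasible region under the capacity bounds, so strong duality and KKT necessity apply. I would also note the standard caveat that the VE is in general a \emph{subset} of the equilibria of \eqref{eq:equilibrium}, so Theorem~\ref{theorem} characterizes the VE specifically, consistent with the definition just given before the statement.
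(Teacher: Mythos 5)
Your proposal is correct and takes essentially the same route as the paper's own proof: both establish that the KKT system of the aggregated problem \eqref{eq:newproblem} decouples (since $\nabla_{\bm{x}^{k}}C^{k'}$ and $\nabla_{\bm{x}^{k}}\bm{g}^{k'}$ vanish for $k'\neq k$) into the joint per-stakeholder conditions $\{\Omega^{\text{rg}},\Omega^{\text{hp}},\Omega^{\text{as}}\}$ under the common-price identification \eqref{eq:ve}, with convexity making the KKT points globally optimal. The only notable difference is presentational: the paper explicitly shows that the bilinear price--quantity transfer terms in $\sum_k C^k$ cancel under \eqref{eq:market1}, \eqref{eq:market2}, and \eqref{eq:ve}, which is what renders the aggregate objective effectively linear and the problem convex, whereas you state this more briefly; your added remarks on Slater's condition for KKT necessity and on the VE being a subset of all equilibria are sound refinements that the paper leaves implicit.
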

\begin{proof}
	The nonconvex terms in the original expression of the objective of (\ref{eq:newproblem}) include $P_{t}^{\text{hp,buy,rg}}\rho_{t}^{\text{rg-hp,e,hp}}-P_{t}^{\text{rg,sell,hp}}\rho_{t}^{\text{rg-hp,e,rg}}$, $P_{t}^{\text{as,buy,rg}}\rho_{t}^{\text{rg-as,e,as}}-P_{t}^{\text{rg,sell,as}}\rho_{t}^{\text{rg-as,e,rg}}$, and $f_{t}^{\text{hp,buy,as}}\rho_{t}^{\text{hp-as,h,hp}}-f_{t}^{\text{as,sell,hp}}\rho_{t}^{\text{hp-as,h,as}}$. Substituting (\ref{eq:market1}), (\ref{eq:market2}), and (\ref{eq:ve}) into these nonconvex terms yields zero, indicating that the objective of (\ref{eq:newproblem}) is substantially linear. Hence, problem (\ref{eq:newproblem}) is convex. Its optimality follows the KKT conditions:\textcolor{black}{
	\begin{subequations}
		\begin{align}
	\nonumber & \partial{L}/\partial \bm{x}^\text{rg} = \nabla_{\bm{x}^{\text{rg}}}(\textstyle \sum_{k} C^{k})+(\bm{\lambda}^{\text{rg}})^{\text{T}}\nabla_{\bm{x}^{\text{rg}}} \bm{g}^{\text{rg}}+(\bm{\mu}^{\text{rg}})^{\text{T}}\nabla_{\bm{x}^{\text{rg}}} \bm{h}^{\text{rg}}\\
			\nonumber&+(\bm{\lambda}^{\text{hp}})^{\text{T}}\nabla_{\bm{x}^{\text{rg}}} \bm{g}^{\text{hp}}+(\bm{\mu}^{\text{hp}})^{\text{T}}\nabla_{\bm{x}^{\text{rg}}} \bm{h}^{\text{hp}}\\
			\nonumber&+(\bm{\lambda}^{\text{as}})^{\text{T}}\nabla_{\bm{x}^{\text{rg}}}\bm{g}^{\text{as}}+
            (\bm{\mu}^{\text{as}})^{\text{T}}\nabla_{\bm{x}^{\text{rg}}}\bm{h}^{\text{as}}\\
			\nonumber&+(\bm{\rho}^{\text{rg-hp,e}})^{\text{T}}\nabla_{\bm{x}^{\text{rg}}}(P^{\text{rg,sell,hp}}-P^{\text{hp,buy,rg}}) \label{eq:L/xR} \\
			\nonumber &+(\bm{\rho}^{\text{rg-as,e}})^{\text{T}}\nabla_{\bm{x}^{\text{rg}}}(P^{\text{rg,sell,as}}-P^{\text{as,buy,rg}})\\
			&+{(\bm{\rho}^{\text{hp-as,h}})}^{\text{T}}\nabla_{\bm{x}^{\text{rg}}}(f^{\text{hp,sell,as}}-f^{\text{as,buy,hp}})= \bm{0}, \\
			&\partial{L}/\partial \bm{x}^\text{hp}= \bm{0},~\partial{L}/\partial \bm{x}^\text{as}= \bm{0}, \label{eq:L/xH}\\
			&(\bm{\lambda}^k)^\text{T} \bm{g}^k= \bm{0}, ~\bm{\lambda}^k\geq \bm{0}, ~\bm{g}^k\leq \bm{0},~\forall k \in \{\text{rg,hp,as}\}, \label{eq:complementary}\\
            &\bm{h}^k = \bm{0},\ \forall k \label{eq:equ}\\
			&\text{(\ref{eq:market1})},\  \text{(\ref{eq:market2})}
			\label{eq:marketall}
		\end{align}\label{eq:kkt}\end{subequations}}where $L$ is the Lagrangian of (\ref{eq:newproblem}); 
	(\ref{eq:L/xH}) are derived similarly to (\ref{eq:L/xR}), with the detailed expressions not given here due to space limitations;\textcolor{black}{
	$\nabla_{\bm{x}^{\text{rg}}}(C^{\text{hp}}+C^{\text{as}})$, $\nabla_{\bm{x}^{\text{rg}}}\bm{g}^{\text{hp}}$, $\nabla_{\bm{x}^{\text{rg}}}\bm{h}^{\text{hp}}$, $\nabla_{\bm{x}^{\text{rg}}}\bm{g}^{\text{as}}$, and $\nabla_{\bm{x}^{\text{rg}}}\bm{h}^{\text{as}}$} are all zero. As a result, (\ref{eq:kkt}) can be reformulated as:
	\begin{subequations}\color{black} \label{eq:kkt2}
		\begin{align}
			\nonumber & \nabla_{\bm{x}^{k}}C^{k} + (\bm{\lambda}^{k})^{\text{T}}\nabla_{\bm{x}^{k}} \bm{g}^{k}+(\bm{\mu}^{k})^{\text{T}}\nabla_{\bm{x}^{k}} \bm{h}^{k}\\
            \nonumber & + (\bm{\rho}^{\text{rg-hp,e}})^{\text{T}}\nabla_{\bm{x}^{k}}(P^{\text{rg,sell,hp}}-P^{\text{hp,buy,rg}}) \\
			\nonumber & + (\bm{\rho}^{\text{rg-as,e}})^{\text{T}} \nabla_{\bm{x}^{k}}(P^{\text{rg,sell,as}} - P^{\text{as,buy,rg}})\\
			& + (\bm{\rho}^{\text{hp-as,h}})^{\text{T}} \nabla_{\bm{x}^{k}}(f^{\text{hp,sell,as}}-f^{\text{as,buy,hp}})= \bm{0},~\forall k,\\
			& \text{(\ref{eq:complementary})},\ \text{(\ref{eq:equ})},\ \text{(\ref{eq:marketall}).}
		\end{align}
	\end{subequations}
Clearly, (\ref{eq:kkt2}) is equivalent to the system of equations $\big\{ \Omega^{\text{rg}}, \Omega^{\text{hp}}, \Omega^{\text{as}} \big\}$ under condition (\ref{eq:ve}). Thus, the optimal solution of (\ref{eq:newproblem}) is the same as the VE of the original model (\ref{eq:equilibrium}).
\end{proof}

 In this way, we can solve an SOCP to obtain the equilibrium rather than the nonconvex system of equations $\big\{ \Omega^{\text{rg}}, \Omega^{\text{hp}}, \Omega^{\text{as}} \big\}$.

\subsection{Temporal Decomposition-Based Solution Algorithm}
\label{sec:4.3gbd}

The proposed multistakeholder planning problem involves ASY and AST, thus requiring attention to long-term power and mass balance due to their slow dynamics \cite{li2022coplanning}. 
Therefore, the classical power system planning paradigms based on typical days are no longer applicable here. To address the issues of long-term balance, we consider a consecutive operational horizon composed of 12 typical weeks selected from 12 months.

 When considering an operational horizon of 12 weeks with a step length of 1 hour, there are 274,200 constraints and 175,403 variables. Although the equivalent MSSE problem (\ref{eq:newproblem}) is an SOCP, which is typically considered easy to solve, it is challenging to solve in this case because of its high dimensionality. 

Fortunately, the horizon $T$ can be divided into several consecutive horizons of length $\tau$ with only a few temporally coupled constraints, such as the states of the HST and AST and the ramping of the ASY.
Consequently, (\ref{eq:newproblem}) can be decomposed into a planning master problem (MP) and 12 operational subproblems (SPs), as illustrated in Fig. \ref{fig:Tt}, which are easy to solve via the GBD.

Furthermore, the MP is a simple linear programming (LP). In GBD, adding multiple cuts, even if some are ineffective, almost does not slow down the solution of MP but rather accelerates the convergence compared to adding a single effective cut. Hence, we solve it using multi-cut GBD and parallelize the SPs to speed up the solving process.

\begin{figure}[t]
	\centering
	\includegraphics[width=3.4in]{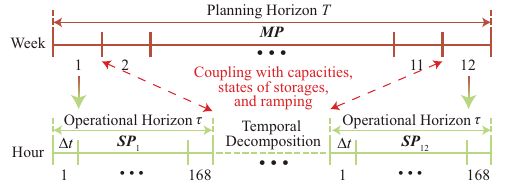}\vspace{-0 pt}
	\caption{Illustration of temporal decomposition in the planning and operational horizons.}
	\label{fig:Tt}\vspace{-0pt}
\end{figure}

For easy understanding, problem (\ref{eq:newproblem}) is rewritten as follows:
\begin{subequations}
	\begin{align}
		\min_{\{\bm{x}_{\text{inv}},\bm{x}_{\text{ope},l},\forall l\}}\ & \bm{a}^\text{T}\bm{x}_{\text{inv}} +  \sum\nolimits_{l=1}^{12} \bm{b}^\text{T}\bm{x}_{\text{ope},l},  \label{eq:globalobj}\\
		\text{s.t.}~~~~~~ &\bm{x}_{\text{inv}}\in X_{\text{inv}},\ \bm{x}_{\text{ope},l}\in X_{\text{ope},l} , \\
		& {\bm{C}}\bm{x}_{\text{inv}}+\bm{D}\bm{x}_{\text{ope},l}\leq \bm{d},~\forall l\in[1,...,12],    \label{eq:couple-st}
	\end{align}
\end{subequations}
where $\bm{x}_{\text{inv}}$ comprises the planning capacities and operational variables related to slow dynamics, including $S^{\text{hp/as,hst}}$, $S^{\text{as,ast}}$ and $P^{\text{as,asy}}$; $\bm{x}_{\text{ope},l}$ are all operational variables w.r.t. the $l$th week; their feasible sets are $X_{\text{inv}}$ and $X_{\text{ope},l}$, respectively; $\bm{a}$, $\bm{b}$, $\bm{d}$, $\bm{C}$, and $\bm{D}$ are the coefficient matrices; (\ref{eq:couple-st}) are the coupling constraints involving \textcircled{1} operational variables constrained by planning capacities in SPs; and \textcircled{2} the states of HST, AST, and ramping of ASY between adjacent SPs. The overall procedure of solving the MSSE via multicut GBD is summarized as follows.

\textit{Step 0:} Initialize $\bm{\hat{x}}_{\text{inv}}$, the iteration index $r\gets1$, and the convergence tolerance $\epsilon$; set the initial upper bound \textit{UB} and lower bound \textit{LB} of the objective (\ref{eq:globalobj}).

\textit{Step 1:} Solve the 12 parallelized SPs with fixed $\bm{\hat{x}}_{\text{inv}}$, with $l=1,\ldots,12$, as 
\begin{align}
	\nonumber \min_{\bm{x}_{\text{ope},l}}~& \bm{b}^{\text{T}}\bm{x}_{\text{ope},l} , \\
	\nonumber \text{s.t.} ~~&\bm{x}_{\text{ope},l}\in X_{\text{ope},l} , \\
	& \bm{C}\bm{\hat{x}}_{\text{inv}} + \bm{D}\bm{x}_{\text{ope},l}\leq \bm{d}: \bm{\mu}_{l}^{\text{opt}}.  \label{eq:sp}
\end{align}

\textit{Step 1a:} If the $l$th SP is feasible, obtain the optimal solution   $\bm{\hat{x}}_{\text{ope},l}$ 
and dual variables $\mu_{l}^{\text{opt}}$, and generate the optimal cut, as 
\begin{align}
	\text{LBD}_l\geq \bm{b}^\text{T}\hat{\bm{x}}_{\text{ope},l} +  ({\bm{C}}\boldsymbol{x}_\text{inv} + \bm{D}\hat{\bm{x}}_{\text{ope},l}-\bm{d})^{\text{T}} \bm{\mu}_l^\text{opt}. \label{eq:optcut}
\end{align}

\textit{Step 1b:}  If the $l$th SP is infeasible, construct a relaxed SP:
\begin{align}
	\nonumber \min_{\{\bm{x}_{\text{ope},l}, \bm{s}\}}~&\bm{1}^{\text{T}} \bm{s}, \\
	\nonumber \text{s.t.} ~~~~&\bm{x}_{\text{ope},l}\in X_{\text{ope},l}, \\
	& \bm{C}\hat{\bm{x}}_{\text{inv}} + \bm{D}\bm{x}_{\text{ope},l} - \bm{d} \leq \bm{s}: \bm{\mu}_{l}^{\text{fea}}, \label{eq:rsp}
\end{align}
where $\bm{s}$ is the introduced relaxation vector. Solve the relaxed SP (\ref{eq:rsp}) to obtain the optimal solution $\bm{\hat{x}}_{\text{ope},l}$ and dual variable $\bm{\mu}_{l}^{\text{fea}}$ and generate a feasible cut
\begin{align}
	(\bm{C}\bm{x}_{\text{inv}}+\bm{D}\hat{\bm{x}}_{\text{ope},l} - \bm{d})^{\text{T}}   \bm{\mu}_{l}^{\text{fea}}  \leq 0.\label{eq:feacut}
\end{align}

\textit{Step 1c:} If all the SPs are feasible, then update the upper bound $\textit{UB} \leftarrow \min\ \{\textit{UB}, \bm{a}^\text{T}\bm{\hat{x}}_\text{inv}+\sum_{l=1}^{12} \bm{b}^\text{T}\bm{\hat{x}}_{\text{ope},l}\}$.

\textit{Step 2:} Update $\bm{\hat{x}}_{\text{inv}}$ by solving the MP
\begin{align}
	\hspace{-3pt} \nonumber\min_{\{\bm{x}_{\text{inv}},\text{LBD}_{l}, \forall l\}}~ & {\bm{a}}^{\text{T}}\bm{x}_{\text{inv}} + \sum\nolimits_{l=1}^{12}\text{LBD}_{l}, \\
	\nonumber \text{s.t.}~~~~~~~ &\bm{x}_{\text{inv}}\in X_{\text{inv}}, \\
	& \text{all optimal cuts (\ref{eq:optcut}) and feasible cuts (\ref{eq:feacut})}, \label{eq:mp}
\end{align}
where $\text{LBD}_{l}$ is the lower bound of the augmented Lagrangian of the $l$-th SP. Then, update $\textit{LB}\leftarrow\bm{a}^\text{T}\hat{\bm{x}}_\text{inv}+\sum_{l=1}^{12}\text{LBD}_l$.

\textit{Step 3:} If $|(\textit{UB}-\textit{LB})/\textit{LB}|\leq \epsilon$, output the obtained optimum; otherwise, update $r \gets r + 1$ and return to \textit{Step 1}.

For more details of the multicut GBD algorithm, interested readers are referred to \cite{geoffrion1972generalized,birge1988multicut}. We do not 
go  further here.

\section{Case Studies}
\label{sec:cases}

\subsection{Case Setups}
\label{sec:Systems}

We analyze the MSSE of an off-grid ReP2A system based on a real-life project in North China, as shown in Fig. \ref{fig:case}. The system includes a 300-MW wind farm and a 100-MW photovoltaic (PV) array. Wind and solar data from 12 typical weeks, selected from historical records \cite{yu2023optimal}, as shown in Fig. \ref{fig:WS}. Their FLHs are about 3,033 and 1,808 hours, respectively. The external ammonia price is based on the historical market price of fossil fuel-based ammonia \cite{wu2023multi}. Key investment and operation parameters are detailed in Tables \ref{tab:para-inv} and \ref{tab:para-ope}, respectively.

\begin{table}[tb]\footnotesize
	\renewcommand{\arraystretch}{1.35}
	\caption{The Key Investment Parameters for Case Studies}\vspace{-0pt}
	\label{tab:para-inv}
	\centering
	\begin{tabular}{cccc}
		\hline\hline
		Facility                           & Unit investment cost                  & Lifetime $y$ (years)             \\
		\hline
		WT                                 & 5,000 CNY/kW                            & 20           \\
		PV                                 & 4,000 CNY/kW                             & 20            \\
		BES                                & 1,500 CNY/kWh                            & 20                   \\
		VC                                 & 200  CNY/kVar                           & 15                          \\
		AE                                 & 3,500 CNY/kW                             & 10                \\
		HST                                & 250  CNY/Nm$^3$                         & 20               \\
		ASY                                & 21,706 CNY/(t/h)                            & 10                \\
		AST                                & 3300 CNY/t                              & 20                   \\
		Transmission line                  & 2,000,000 CNY/km                              & 40                   \\
		Hydrogen pipeline                  & 4,000,000 CNY/km                             & 40                   \\
		\hline\hline
	\end{tabular}
	\vspace{10pt}
	\renewcommand{\arraystretch}{1.5}
	\caption{Parts of the Operation Parameters for Case Studies}\vspace{-0pt}
	\label{tab:para-ope}
	\centering
	\begin{tabular}{cccc}
		\hline\hline
		Symbol                                                            & Value                 & Symbol                                                    &Value            \\
		\hline
		$T/\tau / \Delta t$                                               & 2016/168/1 h          & $\eta^{\text{bes,c}}/\eta^{\text{bes,d}}$                     &90\%/95\%       \\
		$\overline{\eta}^{\text{bes}}/\underline{\eta}^{\text{bes}}$      & 90\%/10\%            &$\overline{\upsilon}_{i}/\underline{\upsilon}_{i}$         & 1.05$^2$/0.95$^2$             \\
		$\overline{\eta}^{\text{ae}}/ \underline{\eta}^{\text{ae}}$       & 100\%/5\%             & $\overline{\eta}^{\text{hst}}/\underline{\eta}^{\text{hst}}$  & 100\%/10\%                 \\
		$\eta^{\text{p2h}}$                                               & 0.2 Nm$^3$/kWh \cite{yu2024optimal}        &$\eta^{\text{hp,comp}}$                                     & 0.033 kWh/Nm$^3$ \cite{yu2024optimal}           \\
		$\eta^{\text{h2a}}$                                               & 0.5027 kg/Nm$^3$ \cite{wu2023multi}     &$\eta^{\text{p2a}}$                                        & 1.5664 kg/kWh  \cite{wu2023multi}          \\
		$\overline{\eta}^{\text{asy}}/\underline{\eta}^{\text{asy}}$        & 100\%/30\%  \cite{wu2023multi}          & $\overline{r}^{\text{asy}}/\underline{r}^{\text{asy}}$      & 20\%/20\% \cite{yu2023optimal,yu2024optimal}                  \\
		\hline\hline
	\end{tabular}\vspace{-0pt}
\end{table}

\begin{figure}[tb]
	\centering
	\includegraphics[width=3.45in]{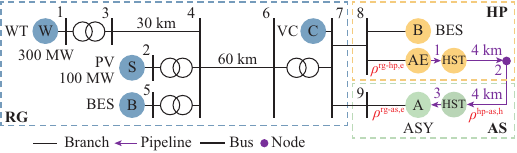}\vspace{-0pt}
	\caption{Topology of the off-grid ReP2A system used in the case study.}
	\label{fig:case}\vspace{-0pt}
\end{figure}

 The simulations are performed via \emph{Wolfram Mathematica 14.0} on a desktop with an \emph{Intel Core i5-12400@2.5 GHz} CPU and 16 GB of RAM. The MP and SPs in the temporal decomposition-based algorithm are solved via \emph{Gurobi 11.0} and \emph{Mosek 10.1}, respectively. The convergence tolerance $\epsilon$ is set at 10$^{-4}$. The overall solving time is approximately 40 minutes.

To account for the interests of the three stakeholders, we define the average transaction prices of electricity and hydrogen $\rho_{\text{avg}}^{\text{rg-hp,e}}$, $\rho_{\text{avg}}^{\text{rg-as,e}}$, and $\rho_{\text{avg}}^{\text{hp-as,h}}$ as
\begin{align}
	\rho_{\text{avg}}^{\text{rg-hp,e}} &= \sum\nolimits_t \rho_t^{\text{rg-hp,e}} P_t^{\text{hp,buy,rg}}/\sum\nolimits_t P_t^{\text{hp,buy,rg}}, \\
	\rho_{\text{avg}}^{\text{rg-as,e}} &= \sum\nolimits_t \rho_t^{\text{rg-as,e}} P_t^{\text{as,buy,rg}}/\sum\nolimits_t P_t^{\text{as,buy,rg}}, \\
	\rho_{\text{avg}}^{\text{hp-as,h}} &= \sum\nolimits_t \rho_t^{\text{hp-as,h}} f_t^{\text{as,buy,hp}}/\sum\nolimits_t f_t^{\text{as,buy,hp}}.
\end{align}
Furthermore, we use the LCOA to evaluate the overall technoeconomic performance of the ReP2A system as follows:
\begin{align}
	& \text{LCOA} = (\sum_k C^k-\dfrac{8760}{T}\sum_t M_t^{\text{as,sell}}\rho_t^{\text{a}})/(\dfrac{8760}{T}\sum_t M_t^{\text{as,pro}}).
\end{align}
\vspace{-16pt}

\begin{figure}[t]
	\centering\vspace{-0pt}
	\includegraphics[width=3.45in]{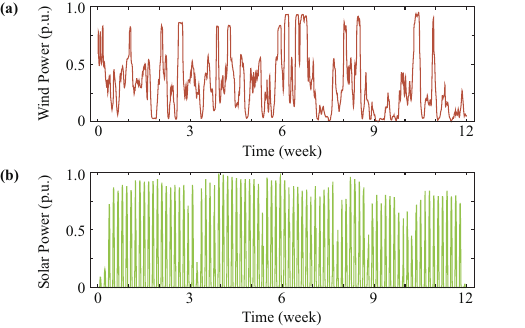}\vspace{0pt}
	\caption{Wind and solar power data of typical weeks used in the case study.}
	\label{fig:WS}\vspace{0pt}
\end{figure}

\begin{figure}[t]
	\centering
	\includegraphics[width=3.5in]{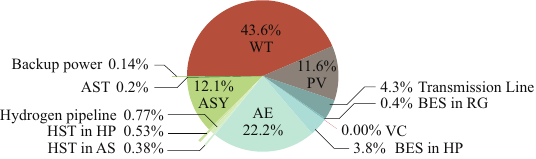}\vspace{0pt}
	\caption{Composition of the LCOA in the base case.}
	\label{fig:bar}
\end{figure}

\begin{figure}[t]
	\centering
	\includegraphics[width=3.45in]{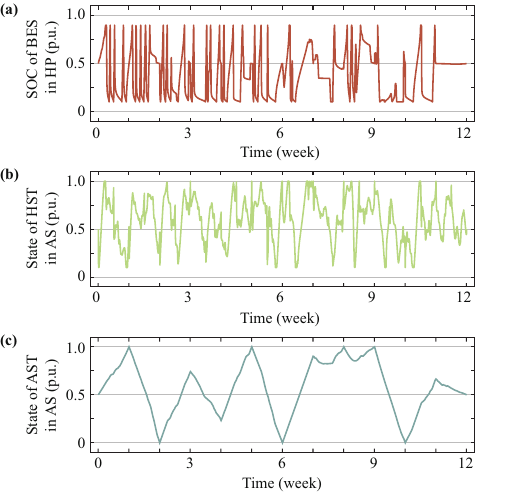}\vspace{0pt}
	\caption{The states of the BES in the HP, the HST in the AS, and the AST in the AS across the 12-week horizon. (a) BES. (b) HST. (c) AST.}
	\label{fig:ES}\vspace{-0pt}
\end{figure}

\begin{figure}[t]
	\centering\vspace{-0pt}
	\includegraphics[width=3.45in]{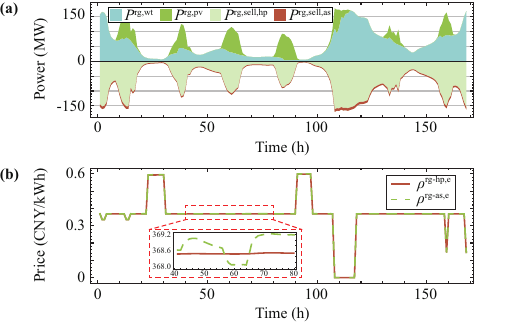}\vspace{0pt}
	\caption{Renewable power generation, load and equilibrium electricity prices. (a) Renewable power and loads of the AE and ASY. (b) Electricity prices. }
	\label{fig:powerprice}\vspace{12pt}
	\includegraphics[width=3.45in]{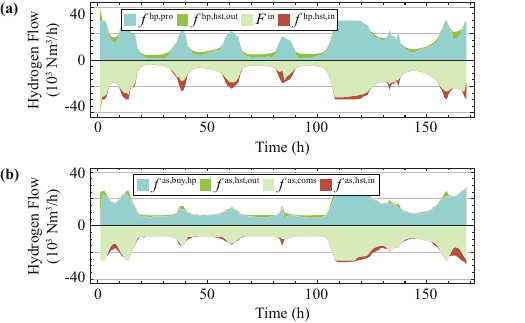}\vspace{0pt}
	\caption{Hydrogen balance at hydrogen nodes. (a) HP node. (b) AS node.}
	\label{fig:hydrogen}
\end{figure}

\subsection{Analysis of Typical Operational and Trading Results under Equilibrium in the Base Case}
\label{sec:Equilibrium}

We analyze the operational behaviors of the staged BSs and conduct a base-case equilibrium analysis. The planning result obtained by the MSSE model is: $W^{\text{rg/hp,bes}}=7.29/77.13$ MWh, $W^{\text{rg,vc}}=0.01$ MVar, $W^{\text{hp,ae}}=149.13$ MW, $W^{\text{hp/as,hst}}=52/72.42\times10^3$ Nm$^3$, $W^{\text{as,asy}}=13.1$ t/h, and $W^{\text{as,ast}}=2,430.76$ t, as listed in Table \ref{tab:comparison}, labeled as {\bf C1}. The LCOA composition is shown in Fig. \ref{fig:bar}. Fig. \ref{fig:ES} illustrates the states of BES in HP, HST in AS, and AST in AS throughout the planning horizon. The state of the BES in RG and the state of the HST in HP are not listed here due to the space limit.
Fig. \ref{fig:powerprice}(a) shows the wind and solar power and loads of AE and ASY in the 8th week, with the electricity prices presented in Fig. \ref{fig:powerprice}(b). Fig. \ref{fig:hydrogen} illustrates the hydrogen flow balance at the HP and AS nodes. Fig. \ref{fig:amprice} shows the average electricity and hydrogen prices of AS and the ammonia price each week.

As shown in Fig. \ref{fig:ES}, the fluctuating patterns of the BES, HST, and AST are daily, weekly, and cross-weekly, respectively, each suited to different time scales. The states of the BES and HST fluctuate with wind and solar power, while the state of the AST follows the opposite trend of the ammonia price, as shown in Fig. \ref{fig:amprice}, reflecting its profit-chasing behavior.

Fig. \ref{fig:powerprice} shows that peak, flat, and valley periods of wind and solar power correspond to low, medium, and high electricity prices under equilibrium, such as $t\in[108,117]$ h, $t\in[98,107]$ h, and $t\in[91,97]$ h, respectively. During flat periods, the prices stabilize around $0.37$ CNY/kWh. Microscopically, $\rho^{\text{rg-as,e}}$ depends more on wind and solar resources compared to $\rho^{\text{rg-hp,e}}$ due to the smaller invested capacity of the RG-side BES.
During high periods, the electricity price approaches $0$ CNY/kWh due to the saturated production capacities of AE and ASY, causing the renewable power supply to exceed the demand. During low periods, the prices stay around $0.6$ CNY/kWh, as it is enveloped by the backup power price $\rho^{\text{as,back}}$ ($0.6$ CNY/kWh) for the ASY.

Combining Figs. \ref{fig:ES} and \ref{fig:powerprice}, we observe that the fluctuations of renewable generation, hydrogen production, and hydrogen consumption (i.e., ASY) are progressively buffered by the staged BESs and HSTs. The hydrogen trading quantities follow the electricity trading. Further examining Fig. \ref{fig:amprice}, we can find that the trend of the hydrogen prices is highly consistent with the electricity prices and shows some consistency with the external ammonia price. Since wind and solar resources are negatively correlated with electricity prices, we infer a negative correlation between the renewables and the equilibrium hydrogen price.

Moreover, we can conclude that the cost of green ammonia, influenced by the electricity and hydrogen prices, is significantly affected by the uncertainty of wind and solar resources. This may pose risks to both the producers and consumers in the green ammonia market. To further enhance the competitiveness of green ammonia, improving trading mechanisms by introducing spot markets \cite{an2024decomposition} or financial tools \cite{wu2023multi,cai2023green} is recommended.

\begin{figure}[t]
	\centering
	\includegraphics[width=3.5in]{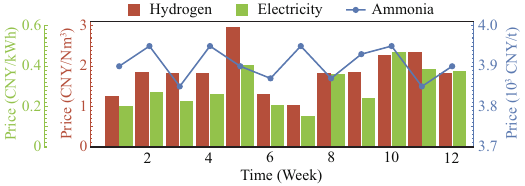}\vspace{-0pt}
	\caption{Electricity, hydrogen, and external ammonia prices over 12 weeks.}
	\label{fig:amprice}\vspace{-0pt}
\end{figure}

\label{sec:planning results}
\begin{table*}[tb]\scriptsize
	\renewcommand{\arraystretch}{1.8}
	\caption{Planning Results and Performance Metrics of Different Cases}\vspace{0pt}
	\label{tab:comparison}
	\centering
	\setlength{\tabcolsep}{4.5pt}
	\begin{tabular}{cccccc}
		\hline\hline
		\vspace{-10pt}
		\\
		Case       &\tabincell{c}{Planning capacities\\$\{W^{\text{rg,bes}},W^{\text{rg,vc}}, W^{\text{hp,bes}},W^{\text{hp,ae}},W^{\text{hp,hst}}, W^{\text{as,hst}},W^{\text{as,asy}},W^{\text{as,ast}}\}$\\ (MWh, MVar, MWh, MW, Nm$^3$, Nm$^\text{3}$, t/h, t)}    & \tabincell{c}{Profits of RG, HP and AS\\\{$-C^{\text{rg}},-C^{\text{hp}},-C^{\text{as}}$\}\\
			(million CNY)}  & \tabincell{c}{Social welfare\\$-\textstyle\sum_k C^k$\\(million CNY)}  & \tabincell{c}{\textcolor{black}{LCOA}\\(CNY/t)} & \tabincell{c}{Trading prices\\\{$\rho_{\text{avg}}^{\text{rg-hp,e}},\rho_{\text{avg}}^{\text{rg-as,e}},\rho_{\text{avg}}^{\text{hp-as,h}}$\}\\(CNY/kWh, CNY/kWh, CNY/Nm$^3$)} \\
		\hline
		\textbf{C1}              & \{7.29, 0.01, 77.13, 149.13, 52.00$\times$10$^3$, 72.42$\times$10$^3$, 13.10, 2430.76\}           &\{\textendash11.60, 5.35, \textendash24.15\}      &\textendash30.40 &4298.5 & \{0.2288, 0.2445, 1.765\}   \\
		
		\textbf{C2}              & \{\textbf{0}, 0.05, 84.55, 149.29, 48.35$\times$10$^3$, 77.82$\times$10$^3$, 13.08, 2427.59\}        &\{\textendash36.89, \textendash3.35, 9.84\}  &\textendash30.40     &4298.3    &\{0.1983, 0.2181, 1.568\}   \\
		
		\textbf{C3}              & \{85.67, 0.11, \textbf{0}, 150.62, 45.21$\times$10$^3$, 92.91$\times$10$^3$, 13.08, 2410.34\}        &\{\textendash25.03, 1,69, \textendash7.14\}  &\textendash30.48     &4299.1    &\{0.2298, 0.2305, 1.665\}   \\
		
		\textbf{C4}              & \{14.75, 0.02, 70.18, 149.754, \textbf{0}, 130.34$\times$10$^3$, 13.17, 2419.91\}       &\{\textendash21.53, 23.80, \textendash32.83\}  &\textendash30.83     &4303.7    &\{0.2187, 0.2368, 1.810\}   \\
		
		\textbf{C5}              & \{12.52, 0.19, 73.18, 150.74, 130.65$\times$10$^3$, \textbf{0}, 13.22, 2425.50\}       &\{\textendash38.70, 5.73, 2.32\}  &\textendash30.98     &4301.0    &\{0.1983, 0.2152, 1.625\}   \\
		
		\textbf{C6}             & \{85.44, 0.01, \textbf{0}, 150.33, 131.32$\times$10$^3$, \textbf{0}, 13.19, 2426.35\}       &\{\textendash38.30, 24.31, \textendash16.75\}  &\textendash30.74     &4302.2    &\{0.2139, 0.2314, 1.735\}   \\
		\hline
		\textbf{C7}             & \tabincell{c}{$W^{\text{rg,wt}}=$ 300 MW, $W^{\text{rg,pv}}=$ 113.35 MW\\\{48.78, 0.06, 48.60, 165.2, 186.60$\times$10$^3$, 11.63$\times$10$^3$, \textbf{15.7}, 2276.96\}}     &\{\textendash4.29, 36.28, \textendash68.97\}  &\textendash36.98     &4348.7    &\{0.2377, 0.2407, 1.987\}   \\
		\hline
		\textbf{C8}             & \tabincell{c}{$W^{\text{rg,wt}}=\textbf{200}~\text{MW}, W^{\text{rg,pv}}=\textbf{260}~\text{MW}$\\\{\textbf{92}, \textbf{0}, \textbf{0}, \textbf{125}, \textbf{100$\times$10$^3$}, \textbf{0}, \textbf{15.7}, \textbf{3000}\}}     &\{\textendash111.69, 92.84, \textendash66.03\}  &\textendash84.88     &5108.5    &\{0.1761, 0.1619, 2.020\}   \\
\hline
		\color{black}\textbf{C9}             & \color{black}\{0, 0\}, \{0, 100, 0\}, \{0, 10, 1512.00\}     &\color{black}\{\textendash10.33, \textendash53.10, \textendash54.84\}  &\color{black}\textendash118.27     &\color{black}6034.66    &\color{black}/   \\
		\hline
		\color{black}\textbf{C10}             & \color{black}\tabincell{c}{\{16.16, 0, 68.87, 149.90, null, null, 13.18, 2422.94\}\\$W^{\text{h\&p,hst}}=$132.49$\times$10$^3$ Nm$^3$}     &\color{black} \tabincell{c} { \{\textendash44.60, null, null\} \\ $-C^{\text{hp}}-C^{\text{as}}$ = 13.69}  &\color{black}\textendash30.91     &\color{black}4307.35   &\color{black}$\rho^{\text{rg-h\&a,e}}=$ 0.2063 CNY/kWh   \\
		\hline
		\textbf{D1}             & \{28.59, 0.06, 62.26, 157.22, 61.36$\times$10$^3$, 83.25$\times$10$^3$, 13.58, 2380.3\}     &\{6.34, 14.96, \textendash17.90\}  &3.40     &4292.6   &\{0.2476, 0.2711, 1.913\}   \\
		\textbf{D2}             & \{\textbf{0}, 0.02, 90.83, 157.22, 67.53$\times$10$^3$, 78.18$\times$10$^3$, 13.58, 2384.6\}     &\{\textendash5.27, 0.66, 8.01\}  &3.40     &4293.6    &\{0.2289, 0.2510, 1.767\}   \\
		\textbf{D3}             & \{90.99, 0.04, \textbf{0}, 157.31, 65.89$\times$10$^3$, 81.99$\times$10$^3$, 13.57, 2383.9\}     &\{\textendash6.26, 11.63, \textendash2.05\}  &3.32     &4293.6    &\{0.2466, 0.2610, 1.823\}   \\
		\textbf{D4}             & \{13.01, 0.02, 73.10, 151.23, \textbf{0}, 136.41$\times$10$^3$, 13.28, 2412.5\}     &\{11.42, 9.22, \textendash18.03\}  &2.61     &4298.3    &\{0.2557, 0.2545, 1.914\}   \\
		\textbf{D5}             & \{30.34, 0.00, 61.02, 157.83, 153.75$\times$10$^3$, \textbf{0}, 13.57, 2383.9\}     &\{7.23, 10.56, \textendash14.65\}  &3.14     &4296.0    &\{0.2491, 0.2620, 1.908\}   \\
		\textbf{D6}             & \{91.50, 0.04, \textbf{0}, 157.31, 152.46$\times$10$^3$, \textbf{0}, 13.66, 2380.3\}     &\{\textendash17.64, 10.03, 10.68\}  &3.07     &4296.8    &\{0.2339, 0.2452, 1.763\}   \\
		\hline
		\textbf{E1}             & \{28.59, 0.06, 62.26, 157.22, 61.36$\times$10$^3$, 83.25$\times$10$^3$, 13.58, 2380.3\}     &\{0.05, 1.82, 1.53\}  &3.40     &4292.6   &\{0.2476, 0.2711, 1.913\}   \\
        \textcolor{black}{\textbf{E2}}             & \textcolor{black}{\{28.59, 0.06, 62.26, 157.22, 61.36$\times$10$^3$, 83.25$\times$10$^3$, 13.58, 2380.3\}}     &\textcolor{black}{\{1.83, 1.22, 0.35\}}  &\textcolor{black}{3.40}     &\textcolor{black}{4292.6}   &\textcolor{black}{/}   \\
        \textcolor{black}{\textbf{E3}}             & \textcolor{black}{\{28.59, 0.06, 62.26, 157.22, 61.36$\times$10$^3$, 83.25$\times$10$^3$, 13.58, 2380.3\}}     &\textcolor{black}{\{1.84, 1.02, 0.54\}}  &\textcolor{black}{3.40}     &\textcolor{black}{4292.6}   & \textcolor{black}{\{0.2426, 0.2657, 1.806\}}  \\
        \hline
		\textcolor{black}{\textbf{F1}}             & \textcolor{black}{\{36.19, 0.25, 57.96, 161.07, 127.64$\times$10$^3$, 37.88$\times$10$^3$, 14.17, 2338.7\}}     &\color{black}\{\textendash27.35, 22.93, \textendash22.96\}  &\color{black}\textendash45.16     &\color{black}4228.0   &\color{black}\{0.2056, 0.2284, 1.747\}   \\
		\hline\hline
	\end{tabular}\vspace{0pt}
\end{table*}

\subsection{Planning Results for the Multistakeholder Sizing Equilibrium under Different Configurations}
\label{sec:planning}

We further analyze the MSSE under \textcolor{black}{ten} cases, \textcolor{black}{examining the differences in sizing results under various configurations, to reveal a potential dilemma in multistakeholder investment practices during the implementation of ReP2A systems}:
\begin{itemize}
	\item \textbf{C1} (The base case in Section \ref{sec:Equilibrium}): the RG and HP both invest in BESs, and the HP and AS both invest in HSTs. In other words, $W^{\text{rg,bes}}$, $W^{\text{hp,bes}}$, $W^{\text{hp,hst}}$, and $W^{\text{as,hst}}$ all must be optimized, as described in Sections \ref{sec:structure} and \ref{sec:model}.
	\item \textbf{C2}: The RG is not allowed to configure BES ($W^{\text{rg,bes}}=0$).
	\item \textbf{C3}: The HP is not allowed to configure BES ($W^{\text{hp,bes}}=0$).
	\item \textbf{C4}: The HP is not allowed to configure HST ($W^{\text{hp,hst}}=0$).
	\item \textbf{C5}: The AS is not allowed to configure HST ($W^{\text{as,hst}}=0$).
	\item \textbf{C6}: The HP is not allowed to configure BES, and the AS is not permitted to configure HSTs ($W^{\text{hp,bes}}=0$, $W^{\text{as,hst}}=0$).
	\item \textbf{C7}: The WT and PV capacities are not fixed, i.e., $W^{\text{rg,wt}}$ and $W^{\text{rg,pv}}$ are to be optimized, and the capacity of ASY is fixed at $W^{\text{as,asy}}=15.7$ t/h, as in \cite{yu2023optimal}. The other settings are consistent with \textbf{C1}.
	\item \textbf{C8}: The capacities of all facilities are fixed at $W^{\text{rg,wt}}=200$ MW, $W^{\text{rg,pv}}=260$ MW, $W^{\text{rg,bes}}=92$ MWh, $W^{\text{rg,vc}}=0$ MVar, $W^{\text{hp,bes}}=0$ MW, $W^{\text{hp,ae}}=125$ MW, $W^{\text{hp,hst}}=10^5$ Nm$^3$, $W^{\text{hp,hst}}=0$ Nm$^3$, $W^{\text{as,asy}}=15.7$ t/h and $W^{\text{as,ast}}=3000$ t, the same as in \cite{yu2023optimal, wu2023multi}. The other settings are consistent with \textbf{C1}.
    \item \color{black}\textbf{C9}: RG, HP, and AS plan separately without integration. RG generates electricity for the power grid (price = 0.18 CNY/kWh), HP buys electricity at time-of-use price \cite{yu2023optimal} to produce hydrogen for the hydrogen market (selling price = 2 CNY/Nm$^3$), and AS buys hydrogen from hydrogen market (purchase price = 2.2 CNY/Nm$^3$) to produce ammonia.
    \item \color{black}\textbf{C10}: HP and AS are invested in by a single entity, represented as H\&A, without hydrogen trading process. The electricity price between RG and H\&A is defined as $\rho^{\text{rg-h\&a,e}}$. The HST is located on the ASY side and its capacity in the H\&A is defined as $W^{\text{h\&a,hst}}$.
\end{itemize}

Table \ref{tab:comparison} shows the planning results and operational performance.
In the base case (C1), the AS has the highest cost among the three stakeholders, with almost all profits transferred to the HP, which aligns with the AS's reluctance to participate in a demo ReP2A project in North China.
Comparing C1 and C2, we can see that the investment in BES by the RG can significantly increase electricity prices, as explained in Section \ref{sec:Equilibrium} and Fig. \ref{fig:powerprice}. Therefore, the current policies requiring integrated investment in wind and solar power and  BES benefit the interests of the RG in implementing ReP2A projects.
Further analysis of C1 and C3 reveals that investing in the BES by the HP also increases its benefits by increasing flexibility in response to fluctuations in wind, solar, and electricity prices.

In case C7, where the ASY capacity is predetermined on the basis of ammonia demand, the overall technoeconomic viability of ReP2A is inferior to that in cases C1 to C6. This suggests that sizing on the basis of wind and solar resources is marginally more advantageous than sizing by ammonia demand. \textcolor{black}{It is important to note that even when the RG can optimize wind, solar, and BES capacities, its profit remains negative due to the fixed transmission line investment cost ($(1+\eta_{\text{O\&M}})\text{CRF}(r,y) C^\text{rg,line}$ = 15.40 million CNY). In reality, optimizing the equipment capacity generates a profit of 11.11 million CNY, resulting in a net profit of -4.29 million CNY.} In C8, with all facilities' capacities fixed, the overall technoeconomic viability of ReP2A becomes unoptimistic, and significant differences in stakeholders' interests can be observed. Therefore, we observe that coordinated sizing for renewable, AE, and ASY facilities, although conducted by stakeholders with conflicting interests, contributes to the technoeconomic viability of ReP2A.

\textcolor{black}{Under case C9, there is no electricity or hydrogen trading and delivery process among the three entities. RG does not deploy BES because the electricity price is fixed. Similarly, HP and AS do not deploy HST. HP does not deploy BES because using off-peak electricity for hydrogen production still cannot offset the BES costs. AS still configures AST to sell ammonia at higher prices in the external market. The sizing results of AE and ASY are both at their lower bounds (zero if no lower bounds exist), indicating poor technoeconomic performance and no appeal to any entity. Additionally, hydrogen production from carbon-emitting electricity for ammonia synthesis is ineligible for green subsidies \cite{del2022techno} and lacks application potential.}

\textcolor{black}{A comparison of C10 with C1 and C4 shows that the overall sizing results and technoeconomic differences of the ReP2A system between the integrated H\&A model and the three-entity model are minimal. However, the transactions and conflicts of interest between the entities differ significantly. The flexibility of the H\&A is improved compared to the individual HP and AS, leading to a reduction in marginal production cost when participating in electricity trading. As a result, the revenue of RG is transferred to the H\&A side. Therefore, encouraging companies to invest in both hydrogen and ammonia simultaneously can increase overall profits and avoid conflicts, compared to separate investments.  Additionally, independent HP and AS stakeholders are incentivized to form profitable collaborations.}

In summary, for cases C1 to C6, to improve the overall technoeconomic performance of the ReP2A system, the capacities of renewables, AE, ASY, and BSs must be properly aligned. However, despite efforts to optimize BES and HST configurations, stakeholders' interests remain unbalanced due to their heterogeneous characteristics. Specifically, in no case do all three stakeholders achieve a positive profit. This phenomenon is very different from the well-studied applications in power systems, for example, games among power sources \cite{Ruiz2012Equilibria}, microgrids \cite{naebi2020epec}, and prosumers \cite{wang2021distribute}. Analyzing \textcolor{black}{the} HP's cost 
in C1 to C6, we can see that the HP plays a dominant role in the equilibrium. To make ReP2A projects invested in by multiple stakeholders implementable, benefit transfer mechanisms or pricing agreements could be established before implementing a ReP2A project to ensure the interests of all parties.

\subsection{Recommendation for the Investment of ReP2A Projects}
\label{sec:extension}

According to the results from C1 to C6, 
at least one stakeholder will incur a loss, thus diminishing the overall motivation for investment. 
To address this issue, providing subsidies \cite{del2022techno,zhao2022potential} for green ammonia
could increase economic competitiveness and stimulate the growth of the green 
ammonia
industries. To explore this, we introduce six additional cases, denoted as \textbf{D1} to \textbf{D6}, where the ammonia price is increased by 400 CNY/t while other settings remain the same as those in C1--C6. The simulation results are shown in Table \ref{tab:comparison}.

In cases D1 to D6, although social welfare is positive, no one achieves mutual benefits under equilibrium, which is consistent with the findings from C1 to C6. Therefore, we conclude that the multistakeholder ReP2A system can hardly achieve reasonable investment under free competition. To address this, it is recommended that regulatory authorities establish a negotiation platform for multistakeholder investment. This would allow stakeholders to form a consortium \cite{yu2023optimal} and enter into benefit transfer or pricing agreements that ensure mutual benefits and facilitate successfully implementing ReP2A projects. \textcolor{black}{Possible methods include:}

\subsubsection{\textcolor{black}{Revenue Transfer}}
As an example, in case D1, RG and HP are profitable, whereas AS incurs a loss. To mitigate this, we propose a benefit transfer mechanism whereby RG transfers 3\% of its revenue from electricity sales \textcolor{black}{(i.e., $P^{\text{rg,sell,hp}}\rho^{\text{rg-hp,e}}$)} to the HP and where the HP transfers 6\% of its revenue from hydrogen sales \textcolor{black}{(i.e., $f^{\text{hp,sell,as}}\rho^{\text{hp-as,h}}$)} to the AS. \textcolor{black}{Based on the results of D1, transferring revenue from HP to AS alone cannot compensate for AS's loss. Therefore, RG also needs to participate in the benefit transfer.} The simulation results for this scenario, denoted as \textbf{E1}, are recorded in Table \ref{tab:comparison}, which shows that all three stakeholders achieve a positive profit. 
Due to the space limit, we will not discuss this further.

\subsubsection{\textcolor{black}{Profit Re-arrangement}}
\textcolor{black}{We divide the total profit into a larger basic part and a smaller compensation part, and design a profit re-arrangement method. The basic part is arranged according to the investment cost proportions of the three parties, i.e., composition of the LCOA, ensuring that all parties receive positive base returns. The compensation part serves as an incentive for the beneficiaries (RG and HP here) under the equilibrium, and is allocated according to the proportion of their respective profits. Similarly, in Case D1, the total profit is divided according to an 8:2 ratio, with the basic part arranged approximately in a 6.0:2.7:1.3 ratio (as shown in Fig. \ref{fig:bar}) to RG/HP/AS. For example, RG's profit is calculated as $3.40\times0.8\times\frac{6.0}{6.0+2.7+1.3}+3.40\times0.2\times\frac{6.34}{6.34+14.96}\approx 1.83$ million CNY. The resulting arrangement is denoted as \textbf{E2}, as shown in Table \ref{tab:comparison}.}

\subsubsection{\textcolor{black}{Price Contacts}}
\textcolor{black}{In addition, multiple stakeholders can sign contracts for transaction prices, making adjustments based on marginal production costs. In Case D1, RG achieves positive profits at marginal electricity prices of approximately 0.2407 CNY/kWh and 0.2635 CNY/kWh, while AS achieves positive profits at a marginal hydrogen price of around 1.810 CNY/Nm$^3$. Based on the equilibrium prices, AS is evidently at a loss. Therefore, the hydrogen price can first be reduced based on the spot price $\rho^{\text{hp-as,h}}$ to ensure AS's profitability. If HP's profit becomes positive, pricing adjustments end. Otherwise, the electricity price is reduced to transfer part of RG's operational profit to HP. Here, the three parties can sign a contract to trade at electricity and hydrogen prices $\hat{\rho}^{\text{rg-hp/as,e}}$, and $\hat{\rho}^{\text{hp-as,h}}$, which are 0.98 times and 0.944 times of the equilibrium values \textcolor{black}{($\hat{\rho}^{\text{rg-hp/as,e}}=0.98\rho^{\text{rg-hp/as,e}}$  and $\hat{\rho}^{\text{hp-as,h}}=0.944\rho^{\text{hp-as,h}}$)}, respectively, denoted as \textbf{E3} in Table \ref{tab:comparison}.}

\textcolor{black}{The methods above can be designed based on the application, and the parameters (such as 3\%, 6\%, 8:2, 0.98, and 0.944) should be tailored, aiming to create conditions for mutual benefits among all stakeholders.}

\subsection{\color{black}Computational Performance}
\label{sec:compute}

\begin{figure}[t]
	\centering
	\includegraphics[width=3.45in]{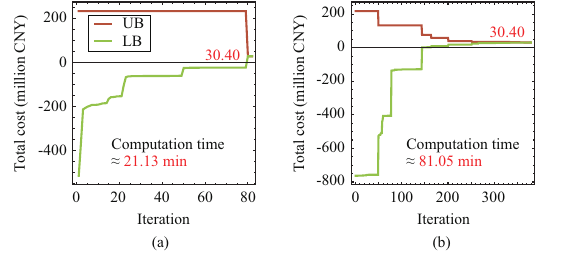}\vspace{-0pt}
	\caption{\textcolor{black}{Convergence and computation time of the proposed multicut and conventional BD algorithms. (a) Proposed. (b) Conventional.}}
	\label{fig:BDconvergence}\vspace{-0pt}
\end{figure}

\textcolor{black}{In the computational performance, we compare the proposed multi-cut Benders decomposition (BD) algorithm with the conventional single-cut BD algorithm, with convergence and computation time shown in Fig. \ref{fig:BDconvergence}. It can be observed that the proposed method significantly improves convergence and reduces computation time to approximately 21.13/81.05$\approx$ 1/4 compared to the traditional method, addressing the long-term planning problem in this work. Additionally, the solution time for each MP is less than 1 second, while each SP takes approximately 4.1 seconds. If the SPs could be fully solved in parallel, the solution time of each iteration would not exceed 10 seconds, keeping the total computation time within 15 minutes for about 82 iterations.}

\textcolor{black}{This paper focuses on the planning of ReP2A but does not involve the selection of power lines, pipelines, and electrolyzers. As the number of planning objects and the dimensions of decision variables and constraints increase, long-term planning problems will become more complex, highlighting the need for more efficient decomposition algorithms.}

\subsection{\color{black}Impact of Uncertainty on Sizing Equilibrium}
\textcolor{black}{
The capacity sizing based on typical single-year wind, solar, and price data may face issues of oversizing the system or RES curtailment due to the inter-annual variability of renewable energy \cite{javed2023impact}. Therefore, the impact of long-term uncertainties in renewable generation, and ammonia prices on the MSSE are analyzed. We divide recent years of wind and solar data into four scenarios based on renewable energy potential (i.e., FLH) \cite{javed2023impact}, with the FLH shown in Fig. \ref{fig:FLH}. Then, we incorporate four scenarios into the multistakeholder equilibrium framework. Since ammonia prices are negatively correlated with wind and solar resources, they are not considered separately. Furthermore, the objectives of the three stakeholders are modified as (\ref{eq:scenario}) and the solution approach is similarly applicable.
\begin{align}
	 C^k=C_{\text{inv}}^k+\sum\nolimits_{s=1}^S \pi_s C_{\text{ope}}^k(s),\ \forall k \label{eq:scenario}
\end{align}
where $s$ and $C_{\text{ope}}^k(s)$ are scenario index and its operational cost, respectively; $S$ is the number of scenarios; $\pi_s$ represents the probability of different scenario $ s$. The planning result is indicated by \textbf{F1} and presented in Table \ref{tab:comparison}. The algorithm converges after 106 iterations, with a computation time of 77.8 minutes. Similarly, as mentioned in Section \ref{sec:compute}, the time becomes shorter with more abundant computational resources. Therefore, the proposed method can effectively address uncertainty through the scenario set.}

\textcolor{black}{Comparing F1 with C1, the capacity in F1 is slightly higher to accommodate higher renewable energy output. As renewable energy potential increases, LCOA decreases, but the drop in ammonia prices leads to a higher total cost. According to the three-stakeholder profits, the uncertainty of wind and solar power does not change the dominant role (i.e., HP) in the equilibrium, but it affects the equilibrium prices. When there is a positive forecast error in renewable energy, the electricity price decreases, and RG profit shifts to HP and AS. Hydrogen price is influenced by both electricity and ammonia prices, and the profit changes of HP and AS require further consideration.}

\begin{figure}[t]
	\centering
	\includegraphics[width=3.45in]{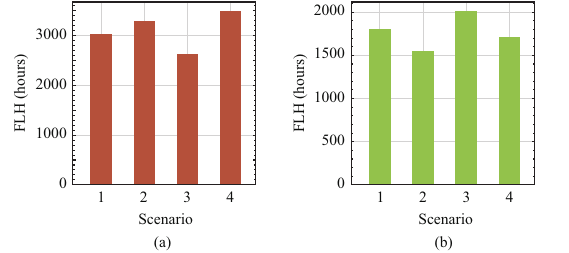}\vspace{0pt}
	\caption{\textcolor{black}{FLH of renewable power under different scenarios representing variations across multiple years. (a) Wind power. (b) Solar power.}}
	\label{fig:FLH}\vspace{-0pt}
\end{figure}

\section{Conclusions}
\label{sec:conclusions}

This study proposes a multistakeholder sizing equilibrium (MSSE) model for planning an ReP2A system, encompassing the entire process of generation, storage, and utilization of electricity, hydrogen, and ammonia.
A multicut generalized Bender decomposition (GBD) approach is developed to efficiently address long-term energy and mass balancing problems.
The following findings are drawn from the simulation results.

1) The interests of RG, HP, and AS stakeholders are not balanced due to their heterogeneous flexibility. They cannot simultaneously achieve positive profit under free competition, causing at least one stakeholder to be reluctant to invest, thereby making the ReP2A project unimplementable.

2) To increase the attractiveness and feasibility of ReP2A projects, regulators should establish a negotiation platform where stakeholders can sign benefit transfer, \textcolor{black}{ pricing, or total profit re-arrangement} agreements, enabling mutual benefits and ensuring the successful implementation of ReP2A projects.

\textcolor{black}{In the future, the following areas require further work to promote the development of ReP2A, including:}

\textcolor{black}{1) Introducing appropriate policies to guide and improve the transaction rules for electricity, hydrogen, and ammonia.}

\textcolor{black}{2) Proposing long-term planning methods that consider diverse uncertainties, such as renewable generation, energy prices, and demand.}

\textcolor{black}{3) Developing coordinated operational mechanisms for multistakeholder ReP2A systems.}

\textcolor{black}{Our future research will focus on the carbon-emission market and its impact on planning}. Additionally, the equilibrium in ammonia transactions with external chemical users within the ReP2A system will be considered.

\bibliographystyle{IEEEtran}

\end{document}